\newtheorem{theorem}{Theorem}[section]
\newtheorem{corrolary}[theorem]{Corollary}
\newtheorem{lemma}[theorem]{Lemma}
\newtheorem{proposition}[theorem]{Proposition}
\newtheorem{remark}[theorem]{Remark}
\newtheorem{definition}[theorem]{Definition}
\newcommand{\dpar}[2]{\dfrac{\partial #1}{\partial #2}}
 \newcommand{\R}{\mathbb R}
 \newcommand{\Z}{\mathbb Z}
 \newcommand{\Q}{\mathbb Q}
\renewcommand{\P}{\mathbb P}
\newcommand{\PP}{\mathbb P}
\newcommand{\KK}{\mathcal K}
\newcommand{\Id}{\text{Id }}
\newcommand{\bbf}{{\mathbf {f}}}
\newcommand{\bn}{{\mathbf {n}}}
\newcommand{\bu}{\mathbf{u}}
\newcommand{\bv}{\mathbf{v}}
\newcommand{\bw}{\mathbf{w}}
\newcommand{\bU}{\mathbf{U}}
\newcommand{\bF}{\mathbf{f}}
\newcommand{\bbF}{\mathbf{\mathcal{F}}}
\newcommand{\bbu}{\mathbf{u}}
\newcommand{\bbg}{\mathbf{g}}
\newcommand{\hbbf}{\hat{\mathbf{f}}}
\newcommand{\hf}{\hat{\mathbf{f}}}
\newcommand{\bx}{\mathbf{x}}
\newcommand{\by}{\mathbf{y}}
\newcommand{\normal}{\mathbf{N}}
\begin{document}
\title{The notion of conservation for residual distribution schemes (or fluctuation splitting schemes), with some applications}
\author{R\'emi Abgrall}
\date{}
\maketitle
\begin{abstract}
In this paper, we discuss the notion of discrete conservation for hyperbolic conservation laws. We introduce what we call a  fluctuation splitting schemes (or residual distribution, also RDS) and show on several examples how this cal lead to new development. In particular, we show that most, if not all known schemes can be rephrased in flux form, and also show how to satisfy additional conservation laws. This review paper is built on \cite{AbgrallConservation,Abgrall2017,paola,svetlana,ABGRALL2018640}.

\bigskip

This paper is also a direct consequence of  the work of P.L. Roe, in particular \cite{roe1981,DeconinckRoeStruijs} where the notion of conservation I will discussed is first introduced. In \cite{myway}, P.L. Roe mentions the Hermes project, and the role of Dassault Aviation in it. I was suggested by Bruno Stoufflet, now Vice-President R\&D and advanced business in this company, to have a detailed look at \cite{DeconinckRoeStruijs}. To be honnest, at the time, I did not understood anything, and this was the case for several years. I was lucky to work with Katherine Mer, at the time a postdoc, now research engineer at CEA, and she helped me a lot in starting to understand this notion of conservation. The present contribution can be seen as what I managed to understand after many years playing around the very productive notion of residual distribution schemes (or fluctuation splitting schemes), introduced by P. L. Roe.
\end{abstract}
\section{Introduction}
The aim of this paper is to discuss some aspects related to the notion of weak solutions of 
\begin{subequations}
  \label{eq:1}
\begin{equation}
  \label{eq:1:1}
  \dpar{\bbu}{t}+\text{ div }\bbf(\bbu)=0 \text{ for }\bx\in \R^d
\end{equation}
\begin{equation}
  \label{eq:1:2}
  u(\bx,0)=u_0(\bx), \bx\in \R^d.
\end{equation}
\end{subequations}
Since at least the work of P. Lax, we know that the correct setting to define the notion of solution to \eqref{eq:1} is the following: $\bbu: \R^d\times \R^+\rightarrow \mathcal{D}\subset \R^m$ is a weak solution of \eqref{eq:1} is $\bu\in L^1(\R^d\times \R^+)^m\cap L^\infty(\R^d\times \R^+)^m$, the initial condition in \eqref{eq:1:2} belongs to $L^1(\R^d)^m\cap L^\infty(\R^d)^m$ and for any $\varphi\in C^1_0(\R^d\times \R^+)$, we have
\begin{equation}
  \label{eq:2}
\int_{\R^d\times \R^+} \bigg ( \dpar{\varphi}{t}(\bx,t)\bbu(\bx,t)+\nabla \varphi(\bx,t)\cdot \bbf(\bbu(\bx,t)) \; d\bx \; dt \bigg )+\int_{\R^d}\varphi(\bx,0)\bbu_0(\bx)\; d\bx=0
   \end{equation}
   In \eqref{eq:1}-\eqref{eq:2}, the flux is $\bbf=(f_1, \ldots , f_d)$; the  functions $f_i$ map the open subset $\mathcal{D}$ of $\R^m$ to $\R^m$, and they are assumed to be $C^1$ for simplicity. 
   
   If one is looking for piecewise $C^1$ solutions, one sees that the solution must satisfy the Rankine-Hugoniot relations. More precisely, if $u$ is defined by $\R^d\times \R^+=\Omega^+\cup\Omega^-$ where the boundary $\Sigma=\Omega^+\cap \Omega^-$ is a regular hypersurface. To make things simple, we assume that $\Sigma=\{ (\bx,t)\in \R^d\times \R^+, \bx=\psi(t)\}$ with $\psi$ $C^1$. Then if 
   $$u(x,t)=\left \{ \begin{array}{ll}
   u^+(\bx,t) & \text{ if } (\bx,t)\in \Omega^+\\
   u^-(\bx,t) & \text{ if } (\bx,t)\in \Omega^-
   \end{array}\right .,
   $$
   with $u^\pm$ smooth in $\Omega^\pm$, 
   $u$ is a weak solution of \eqref{eq:1} if it is a classical solution in the interior of $\Omega^+$ and $\Omega^-$, and for any point $(\bx,t)\in \Sigma$, we have
   \begin{equation}
   \label{RH}
   \bbf(u^+(\bx,t))\cdot\bn-\bbf(u^-(\bx,t))\cdot\bn =\sigma (\bx,t) (u^+(\bx,t)-u^-(\bx,t))
   \end{equation}
   where $\bn$ is a normal of $\Sigma$ at $(\bx,t)$, $\bu^\pm(\bx,t)=\lim\limits_{\mathbf{y}\rightarrow \bx, \mathbf{y}\in \Omega^\pm} u^{\pm}(\mathbf{y},t)$ and $\sigma=\dfrac{d\psi}{dt}$.
   
   An important notion is that of entropy. An entropy $E$ is a convex function defined on $\mathcal{D}$ (hence this set is assumed to be convex) such that there exists $\mathbf{g}=(g_1, \ldots , g_d)$ $C^1$ with, for any $j=1, \ldots d$,
   $$\nabla_{\bbu}E^T \nabla_\bbu f_j=\nabla_\bbu g_j.$$
   Hence, if $\bbu$ is $C^1$, we also have  that
   $$\dpar{E}{t}+\text{ div } \mathbf{g}(\bbu) =0.$$
   
   It is well known that the weak solutions of \eqref{eq:1} are not smooth nor continuous in general, so that the above equality cannot be met in general for weak solutions. It is said that a weak solution $\bbu$ is an entropy solution if 
   for any positive $\varphi\in C^1_0(\R^d\times \R^+)$, we have
\begin{equation}
  \label{eq:2:0}
\int_{\R^d\times \R^+} \bigg ( \dpar{\varphi}{t}(\bx,t)E(\bbu)+\nabla \varphi(\bx,t)\cdot \mathbf{g}(\bbu(\bx,t)) \; d\bx \; dt \bigg )+\int_{\R^d}\varphi(\bx,0)E(\bbu_0(\bx))\; d\bx\leq 0.
   \end{equation}
Details can be obtained in classical references such as \cite{RaviartGodelwski,LeVeque}.

The whole purpose is to define a suitable numerical framework for approximating \eqref{eq:1} such that, when a sequence of meshes is considered, with a spatial characteristic size that is converging to zero, the sequence of numerical solution will converge to a weak solution and, if one or more entropies are also considered, to a weak entropy solutions for each of these entropies.

The format of this paper is as follows. I start by recalling the classical notion of discrete conservation introduce by P. Lax and B. Wendroff in the early 60's, and recall what may happen when the approximation does not exactly fit this framework. I also recall that not all scheme fits in that framework, dispites their success.  Then I introduce the notion or residual distribution scheme, and give a Lax-Wendroff like theorem. Using this notion of conservation, I show that any residual distribution scheme is also a finite volume scheme, with non standard flux functions. I also show, using the same concepts, how to satisfy more than one conservation relation, and how to discretise conservative systems not written in conservation form, such as the Euler equation in primitive variables. A conclusion follows.
%%%%%
\section{Classical setting: the Lax Wendroff theorem}
%%%%%
The answer, or one answer to this question, has been given by Lax and Wendroff \cite{LaxWendroff}. We formulate it in one spatial dimension, for simplicity, and provide references for the extension in several dimensions.

\begin{theorem}[Lax-Wendroff]
Consider the problem \eqref{eq:1} for $d=1$.
Consider a mesh $\{x_j=j\Delta x\}_{j\in \Z}$, the control volumes $C_{j}=]x_{j-1/2},x_{j+1/2}]$, and $\lambda>0$
Let $u_j^0$ be an approximation of 
$$\frac{1}{\Delta x}\int_{C_j} u^0(x) dx.$$
Consider the numerical scheme (with $\tfrac{\Delta t}{\Delta x}=\lambda$)
$$u_j^{n+1}=u_j^n-\dfrac{\Delta t}{\Delta x}\big ( \hat{f}_{j+1/2}-\hat{f}_{j-1/2}\big),$$
with the numerical flux: $\hat{f}_{j+1/2}=\hat{f}_{j+1/2}(v_{j-k}, \ldots , v_j, \ldots v_{j+k}).$
We define $u_\Delta $ by:
$$u_\Delta(x,t)=u_j^n \text{ if } (x,t)\in C_{j+1/2}\times [t_n, t_{n+1}[.$$
Assume that:
\begin{enumerate}
\item $\hat{f}_{j+1/2}$ is consistant with $\bbf$:  for any $v$, $\hat{f}_{j+1/2}(v,\ldots, v\ldots v)=\bbf(v)$,
\item $\hat{f}$ is a continuous function of its arguments,
\item The scheme is stable: there exists a constant $C(u_0, f)$ such that $\max\limits_j|u_j^n|\leq C$ for all $n$
\item There exists a subsequence  of $u_\Delta $ that converges to $v$ in $L^2(\R\times \R^+)$
\end{enumerate}
Then $v$ is a weak solution of the problem.
\end{theorem}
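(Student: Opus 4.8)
The plan is to follow the classical Lax--Wendroff argument: test the scheme against a smooth compactly supported function, perform summation by parts in both space and time, and pass to the limit along the convergent subsequence, using consistency to identify the limit of the numerical flux with the true flux.

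\medskip

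\noindent\textbf{Proof proposal.} First I would fix $\varphi\in C^1_0(\R\times\R^+)$ and set $\varphi_j^n=\varphi(x_j,t_n)$. Multiplying the update formula $u_j^{n+1}-u_j^n=-\lambda(\hat f_{j+1/2}-\hat f_{j-1/2})$ by $\Delta x\,\varphi_j^n$ and summing over all $j\in\Z$ and $n\ge 0$ gives an identity that I would rearrange by discrete summation by parts: in the time sum, move the difference $u_j^{n+1}-u_j^n$ onto $\varphi$ to produce terms involving $\varphi_j^n-\varphi_j^{n-1}$ plus a boundary contribution at $n=0$ carrying the initial data $u_j^0$; in the space sum, move the difference $\hat f_{j+1/2}-\hat f_{j-1/2}$ onto $\varphi$ to produce terms involving $\varphi_{j+1}^n-\varphi_j^n$. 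Dividing suitably by $\Delta t$ and $\Delta x$, the resulting expression is, up to error terms, a Riemann-sum approximation of
\begin{equation*}
\int_{\R\times\R^+}\Big(\dpar{\varphi}{t}\,u_\Delta+\dpar{\varphi}{x}\,\widehat f_\Delta\Big)\,dx\,dt+\int_{\R}\varphi(x,0)\,u^0_\Delta(x)\,dx,
\end{equation*}
where $\widehat f_\Delta$ is the piecewise-constant function built from the numerical fluxes $\hat f_{j+1/2}^n$.

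\medskip

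Next I would pass to the limit along the subsequence of assumption (4). The stability bound (3) confines all $u_j^n$ to a fixed compact set, on which $\bbf$ and $\hat f$ are uniformly continuous (assumptions (1)--(2)); together with the $L^2$ (hence, on the compact support of $\varphi$, up to a subsequence also a.e.) convergence $u_\Delta\to v$, this lets me conclude that $\widehat f_\Delta\to \bbf(v)$ in $L^2_{loc}$: the key point is the consistency estimate
\begin{equation*}
\big|\hat f_{j+1/2}^n-\bbf(u_j^n)\big|=\big|\hat f_{j+1/2}(u_{j-k}^n,\ldots,u_{j+k}^n)-\hat f_{j+1/2}(u_j^n,\ldots,u_j^n)\big|\le \omega\Big(\max_{|l|\le k}|u_{j+l}^n-u_j^n|\Big),
\end{equation*}
$\omega$ being a modulus of continuity of $\hat f$ on the compact set, and the right-hand side tends to zero in $L^1_{loc}$ because the $L^2$-convergent sequence is $L^2$-Cauchy, so nearby cell values become close in an averaged sense. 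The Riemann sums then converge to the integrals, the error terms from the summation by parts (which are $O(\Delta x+\Delta t)$ on the support of $\varphi$ thanks to $\varphi\in C^1$) vanish, the initial term converges since $u^0_\Delta\to u^0$ in $L^1_{loc}$, and one recovers exactly \eqref{eq:2}. Since $\varphi$ was arbitrary, $v$ is a weak solution.

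\medskip

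The main obstacle is the flux-convergence step $\widehat f_\Delta\to\bbf(v)$: pointwise a.e.\ convergence of $u_\Delta$ alone does not immediately control the \emph{differences} of neighbouring cell values that enter $\hat f_{j+1/2}$ through its several arguments. One has to exploit that $L^2$ convergence gives $L^2$-Cauchyness, so that $\sum_{j,n}\Delta x\,\Delta t\,|u_{j+l}^n-u_j^n|^2\to 0$ for each fixed shift $l$ (a discrete continuity-of-translation argument), which after invoking the uniform modulus of continuity $\omega$ and Cauchy--Schwarz yields $\widehat f_\Delta-\bbf(u_\Delta)\to 0$ in $L^1_{loc}$; combined with $\bbf(u_\Delta)\to\bbf(v)$ (uniform continuity of $\bbf$ on the compact set plus dominated convergence) this closes the argument. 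Everything else — the summation by parts, the bookkeeping of boundary terms, and the identification of Riemann sums with integrals — is routine once the support of $\varphi$ is used to reduce all sums to finitely many terms per time level.
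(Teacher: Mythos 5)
Your proposal is correct and follows essentially the same route as the paper, which simply invokes the Abel summation (discrete integration by parts) procedure together with a dominated-convergence/consistency passage to the limit. You in fact supply the one genuinely delicate detail the paper leaves implicit --- controlling $\widehat f_\Delta-\bbf(u_\Delta)$ via the modulus of continuity of $\hat f$ and the $L^2$ continuity-of-translations argument --- and that step is carried out correctly.
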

\begin{proof} Use the form of the scheme to do integration by part (Abel summation procedure)+Lebesgue's dominated convergence thm.
\end{proof}
\begin{corrolary}
If $E$ is an entropy, and the numerical scheme satisfies the following inequalities
$$E(u_j^{n+1})-E(u_j^n)-\dfrac{\Delta t}{\Delta x}\big ( \hat{g}_{j+1/2}-\hat{g}_{j-1/2}\big)\leq 0$$
where the entropy flux $\hat{g}_{l+1/2}=\hat{g}(v_{j-k}, \ldots , v_j, \ldots v_{j+k})$ is consistent with the entropy flux $\mathbf{g}$, then under the assumptions of the Lax Wendroff theorem, the function $v$ is a weak entropy solution for $E$.
\end{corrolary}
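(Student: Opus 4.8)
The plan is to mirror the proof of the Lax--Wendroff theorem itself, since the entropy inequality has exactly the same algebraic structure as the conservation relation, only with an inequality in place of an equality. First I would fix a nonnegative test function $\varphi\in C^1_0(\R\times\R^+)$, set $\varphi_j^n=\varphi(x_j,t_n)$, multiply the assumed cell inequality
$$E(u_j^{n+1})-E(u_j^n)-\dfrac{\Delta t}{\Delta x}\big(\hat g_{j+1/2}-\hat g_{j-1/2}\big)\leq 0$$
by $\Delta x\,\varphi_j^n\geq 0$ (so the inequality is preserved), and sum over $j\in\Z$ and $n\geq 0$. Because $\varphi$ has compact support, all sums are finite and there are no boundary terms at infinity.

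Next I would perform summation by parts (Abel summation) in both $n$ and $j$, exactly as in the Lax--Wendroff argument: the time-difference terms get reorganized so that $E(u_j^n)$ is multiplied by $\varphi_j^{n}-\varphi_j^{n-1}$, producing a discrete $\partial_t\varphi$, together with the $n=0$ boundary layer carrying $E(u_j^0)\varphi_j^0$; the flux-difference terms get reorganized so that $\hat g_{j+1/2}$ is multiplied by $\varphi_{j+1}^n-\varphi_j^n$, producing a discrete $\partial_x\varphi$. Throughout, the direction of the inequality is untouched because we only rearranged a single sum that was $\leq 0$. This yields a discrete analogue of $\int(\partial_t\varphi\,E+\partial_x\varphi\,\hat g)+\int\varphi(\cdot,0)E(u_0)\geq 0$ — more precisely $\leq$ or $\geq$ depending on sign bookkeeping, matching \eqref{eq:2:0}.

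Then I would pass to the limit along the subsequence $u_\Delta\to v$ in $L^2(\R\times\R^+)$ furnished by hypothesis (4) of the theorem. Since $E$ and the consistent entropy flux $\hat g$ are continuous and, by the stability bound (3), all arguments stay in a fixed compact set on which $E$ and $\hat g$ are uniformly continuous and bounded, $E(u_\Delta)\to E(v)$ and $\hat g(\ldots)\to g(v)$ in $L^2_{loc}$ (or boundedly a.e., invoking Lebesgue dominated convergence on the support of $\varphi$). The consistency $\hat g(v,\ldots,v)=\mathbf g(v)$ is what makes the discrete flux sum converge to $\int \nabla\varphi\cdot\mathbf g(v)$, just as consistency of $\hat f$ did in the theorem. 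The Riemann sums $\sum \Delta x\Delta t\,(\cdots)$ converge to the corresponding integrals, and likewise the initial layer converges to $\int_{\R}\varphi(x,0)E(v_0(x))\,dx$ provided $u_j^0$ approximates the cell averages of $u_0$. Taking the limit of an inequality preserves it, giving \eqref{eq:2:0} for all nonnegative $\varphi\in C^1_0$, i.e. $v$ is an entropy solution for $E$.

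The main obstacle — really the only nonroutine point — is the interchange of limit and the nonlinear functions $E$ and $\hat g$: $L^2$ convergence alone does not give convergence of $E(u_\Delta)$ without a bound, so one must genuinely use stability (3) to confine the solution to a compact set where $E$ is Lipschitz, and then either extract a further subsequence converging a.e. to apply dominated convergence, or argue directly via uniform continuity on that compact set. Everything else — the sign tracking in Abel summation, the compact-support truncation, the Riemann-sum limits — is bookkeeping identical to the Lax--Wendroff proof, which is why the author's one-line proof simply says to repeat that argument.
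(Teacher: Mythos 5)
Your proposal is correct and follows exactly the route the paper intends: repeat the Abel-summation argument of the Lax--Wendroff proof with a nonnegative test function so the inequality is preserved, then pass to the limit using the $L^\infty$ stability bound, consistency of $\hat{g}$, and dominated convergence after extracting an a.e.-convergent subsequence. This matches the paper's one-line proof ("similar, using positive test functions"), with the nonlinear-composition point you flag being precisely why the paper invokes Lebesgue's dominated convergence theorem.
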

\begin{proof}
The proof is similar, using positive test functions.
\end{proof}
Extension of this results for several spatial dimensions exists, see for example \cite{kroner}.

Since this result, researchers have tried to improve the quality of the numerical approximation by designing more accurate, more robust flux functions, and to encapsulate in better type of time stepping approximation. But in all cases by strickly respecting the Lax Wendroff theorem. Indeed there are many excellent reasons for that, and let us show a couple of counter examples showing what occurs when this framework is violated.

\subsection{Violating the flux form}
Let us consider the Burgers equation,
$$\dpar{u}{t}+u\dpar{u}{x}=0$$ that can also be rewritten in conservation form (if one assumes that the solution is smooth),
$$\dpar{u}{t}+\frac{1}{2}\dpar{u^2}{x}=0, \qquad f(u)=
\dfrac{u^2}{2}.$$
Assuming that $u$ stays positive, two reasonable approximations are
\begin{itemize}
\item Non conservation form:
\begin{equation}\label{Burger:nc}
u_i^{n+1}=u_i^n-\dfrac{\Delta t}{\Delta x} u_i^n\big ( u_i^n-u_{i-1}^n\big ),
\end{equation}
\item Conservation form
\begin{equation}\label{Burger:cons}
u_i^{n+1}=u_i^n-\dfrac{\Delta t}{\Delta x} \big ( f(u_i^n)-f(u_{i-1}^n) \big ).
\end{equation}
\end{itemize}
Both schemes  can be rewritten in a very similar form. In both cases, we have 
$$
u_i^{n+1}=u_i^n-\dfrac{\Delta t}{\Delta x} a_{i-1/2} \big ( u_i^n-u_{i-1}^n\big ).$$
where in the case \eqref{Burger:nc}, $a_{j+1/2}=u_j^n$ and in the case \eqref{Burger:cons}, $a_{j+1/2}=\tfrac{u_{j+1}+u_j}{2}$. Only the speed is modified. 

Since we assume that $u_j^n\in [0, A]$, and since we can write in both cases
$$u_i^{n+1}= (1-\lambda a_{i-1/2})u_i^n+\lambda a_{i-1/2} u_{i-1}^n,$$ we see that $u_i^{n+1}\in [0, A]$ if $\lambda |a_{i+1/2}|\leq \lambda A\leq 1.$ Similarly, we have easily that if in addition that $u_j^n\in [0, A]$
$\Delta x\sum_j |u_j^n|^2\leq B$ then $\Delta x\sum_j |u_j^{n+1}|^2\leq B$ under the same constraint. We also have
that $\sum_{j\in \Z} |u_{j+1}^{n+1}-u_j^{n+1}|\leq \sum_{j\in \Z} |u_{j+1}^n-u_j^n|$ under the same constraints, i.e. the schemes are both total variation diminishing. using Helly's theorem, we see that in both cases a subsequence in converging in $L^2$ to some function. Now, doing numerical simulations we see that we do not converge to the same solution. The one obtained from \eqref{Burger:nc} is a weak solution, thanks to Lax Wendroff theorem. If $E$ is any entropy, denoting by $v$ the gradient of the entropy with respect to $u$, we see that for the conservative case, we get:
$$
v_i^n\big ( u_i^{n+1}-u_i^n\big ) +\lambda \big ( \hf_{i+1/2}-\hf_{i-1/2}\big )=0, \qquad \hf_{j+1/2}=\frac{(u_i^n)^2}{2}.
$$
The entropy flux (when we express the flux in term of the entropy variable) is with some abuse of language
$$g=\int_u v f_v \; dv= v \; f-\int_v f \; dv= v \; f-\theta(v), $$ and following Tadmor, we introduce the entropy flux
$$\hat{g}_{i+1/2}= \bar v_{i+1/2}\hf_{i+1/2}-\bar \theta_{i+1/2}$$
where $\bar f$ is the arithmetic average between $f_i$ and $f_{i+1}$.
After some calculations, we get
\begin{equation*}
\begin{split}
v_i^n\big ( u_i^{n+1}-u_i^n\big ) +\lambda \big ( \hat{g}_{i+1/2}-\hat{g}_{i-1/2}\big ) = \lambda \int_{v_i}^{v_{i+1}} \big ( f(v)-\hat{f}_{i+1/2} \big ) dv+\lambda \int_{v_{i-1}}^{v_{i}} \big ( f(v)-\hat{f}_{i-11/2} \big ) dv.
\end{split}
\end{equation*}
Since
\begin{equation*}
\begin{split}
E_i^{n+1}&=E_i^n+v_i^n(u_i^{n+1}-u_i^n)+\frac{1}{2}\int_{0}^1 E_{uu}((1-s) u_i^n+s u_{i}^{n+1}) (u_i^{n+1}-u_i^n)^2 ds\\
& =E_i^n+v_i^n(u_i^{n+1}-u_i^n)+\frac{\lambda^2}{8}\int_{0}^1 E_{uu}((1-s) u_i^n+s u_{i}^{n+1}) \bigg (\big (u_i^{n}\big )^2-\big (u_{i-1}^n\big )^2\bigg )^2 ds\\
&=E_i^n+v_i^n(u_i^{n+1}-u_i^n)+\frac{\lambda^2}{2} a_{i-1/2}^2 (u_i^n-u_{i-1}^n)^2 \; \int_{0}^1 E_{uu}((1-s) u_i^n+s u_{i}^{n+1}) ds
\end{split}
\end{equation*}
we get
\begin{equation*}
\begin{split}
E_i^{n+1}-E_i^n&+\lambda \big (\hat{g}_{i+1/2}-\hat{g}_{i-1/2} \big ) =\lambda \int_{v_i}^{v_{i+1}} \big ( f(v)-\hat{f}_{i+1/2} \big ) dv+\lambda \int_{v_{i-1}}^{v_{i}} \big ( f(v)-\hat{f}_{i-1/2} \big ) dv\\
& \qquad +\frac{\lambda^2}{2} a_{i-1/2}^2 (u_i^n-u_{i-1}^n)^2  \; \int_{0}^1 E_{uu}((1-s) u_i^n+s u_{i}^{n+1}) ds
\end{split}
\end{equation*}
In our case, we always have $a_{j+1/2}>0$, since $u_i^{n}>0$ for all $i$ and $n$, we see that for $\lambda$ small enough the right hand side is negative because
$$\int_{v_i}^{v_{i+1}} \big ( f(v)-\hat{f}_{i+1/2} \big ) dv<0, \qquad \int_{v_{i-1}}^{v_{i}} \big ( f(v)-\hat{f}_{i-1/2} \big ) dv<0.$$
So the weak solution of the conservative scheme is an entropy solution for any entropy, and by uniqueness, this is the solution.
\begin{figure}[h]
\begin{center}
\includegraphics[width=0.5\textwidth]{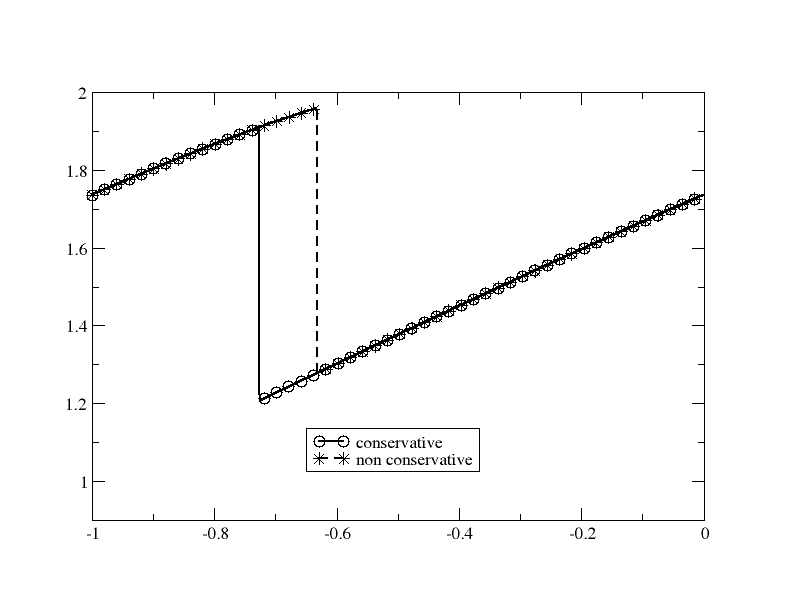}
\end{center}
\caption{\label{cons:nc} Solution of the Burgers equation with $u_0(x)=1+\cos(2\pi(x+1/2))$ with periodic boundary conditions.}
\end{figure}

In \cite{hou}, it is shown that a scheme that is stable in $L^\infty$ and in $BV$ and for which there exists a subsequence  that converges in  $L^1_{loc}$ to a solution, in the sense of distribution, of
$$\dpar{u}{t}+\dpar{f(u)}{x}=\mu$$
where $\mu$ is a locally bounded real-valued Borel measure defined on $\R\times \R^+$. In the case that the initial condition  has a finite number of monotonicity changes, they also show that $\mu$ is concentrated on the union of the curves of discontinuity, at least for small times.  It seems quite difficult to provide a more constructive description of $\mu$ and this does not prevent that $\mu \equiv 0$\ldots ! Though it seems to be seldom the case.

However, there are many cases where one would like to deal with the non conservative version of a model, see bellow for a practical example.

\subsection{Change of variables}
It is well know that nonlinear change of variables are not permitted. The classical example is again the Burgers equation
$$\dpar{u}{t}+\frac{1}{2}\dpar{u^2}{x}=0.$$
If one sets $u=v^3$, this is a one-to-one change of variable, and the smooth solutions in $v$ will satisfy
$$\dpar{v}{t}+\frac{1}{4}\dpar{v^4}{x}=0.$$
However the weak solutions are not similar, because the Rankine-Hugoniot relations are different:
For the original Burgers equation, the velocity of a shock between the states $u^\pm$ is
$$\sigma=\dfrac{u^++u^-}{2},$$
while for the second problem it is
$$\sigma'=\dfrac{1}{4}\dfrac{(v^+)^4-(v^-)^4}{v^+-v^-}=\dfrac{(v^+)^3+3 (v^+)^2v^-+3v^+(v^-)^2+(v^-)^3}{4},$$
and even if $u^\pm=(v^\pm)^3$, there is no chance that the two velocity match, in general.

\subsection{Some questions}
However, in many case, one would like to change variables and/or work with non conservative models. A good example is the the Euler system for fluid mechanics.

The conserved version of this system is, setting $\bu=(\rho, \mathbf{m}=\rho \mathbf{v}, E)^T$ where here $\mathbf{v}$ represents the velocity, $\rho$ is the density and $E=e+\tfrac{1}{2}\rho \mathbf{v}^2$ is the total energy, with $e$ the internal energy, the flux is
$$\bbf(\bu)=\begin{pmatrix} \mathbf{m}\\ \dfrac{\mathbf{m}\otimes\mathbf{m}}{\rho}+p\text{Id}\\ \mathbf{m}\dfrac{E+p}{\rho}\end{pmatrix}.$$
The system is closed if the pressure is given in term of $\bu$, $p=p(e,\rho)$. Defining the enthalpy $H=\tfrac{E+p}{\rho}$, under the condition that 
$$\dpar{p}{e} (H-\frac{1}{2}\mathbf{v}^2)+\dpar{p}{\rho}>0,$$ the system is hyperbolic. The speed of sound is given by
$$a^2=\dpar{p}{e} (H-\frac{1}{2}\mathbf{v}^2)+\dpar{p}{\rho}.$$ If $p=(\gamma-1)e$, i.e. for a perfect gas, the speed of sound is given by the classical
$$a^2=\gamma\dfrac{p}{\rho}.$$

Another way to write the system is:
\begin{equation}\label{euler:nc}
\dpar{}{t}\begin{pmatrix}\rho\\ \rho \bv \\ p\end{pmatrix} +\begin{pmatrix}\text{ div } \rho \bv\\
\text{div }(\rho \bv\otimes \bv+p\,\text{Id})\\
\bv\cdot \nabla p+\rho a^2 \text{ div }\bv \end{pmatrix}
=0.
\end{equation}
and this form is interesting because the pressure is a variable that can be tracked directly from the system, and not as a consequene of the total energy equation. Unfortunately the system \eqref{euler:nc} is not in conservation form, and then starting from that relation is a priori not a good idea.

\subsection{Many schemes are not naturally written in a finite volume form}
To solve the problem \eqref{eq:1}, there are other methods than finite volume schemes, this does not mean they are not good. However, it seems that one of the essential feature of finite volume schemes, i.e. local conservation, is lost. In order to simplify, we consider the steady problem
\begin{subequations}\label{eq1}
\begin{equation}
\label{eq1:1}
\text{ div }\bbf(\bu)=0\qquad \text{in }\Omega
\end{equation}
subjected to
\begin{equation}\label{eq1:2}
\min( \nabla_\bu \bbf(\bu)\cdot \bn(\bx), 0) (\bu-\bu_b)=0 \text{ on }\partial\Omega.
\end{equation}
\end{subequations}
The domain $\Omega$ is assumed to be bounded, and regular. We assume for simplicity that its boundary is never characteristic. We also assume that it has a polygonal shape and thus any triangulation that we consider covers $\Omega$ exactly, only for simplicity.
In \eqref{eq1:2}, $\bn(\bx)$ is the outward unit vector at $\bx\in \partial \Omega$ %(thus we assume enough regularity for $\Omega$) 
and $\bu_b$ is a  regular enough function.  The weak formulation of \eqref{eq1} is: $\bu\in  L^\infty(\Omega)$ is a weak solution of \eqref{eq1} if for any $\varphi\in C^1_0(\Omega)$, 
\begin{equation}
\label{weak:eq1}
-\int_\Omega \nabla \bv\cdot \bbf(\bu_h)\; d\bx+\int_{\partial \Omega} \bv \big ( \mathbf{\mathcal{F}}_\bn(\bu,\bu_b)-\bbf(\bu)\cdot\bn\big ) \; d\gamma=0
\end{equation}
where $\mathbf{\mathcal{F}}_\bn$ is a flux that is almost everywhere the upwind flux:
$$\mathbf{\mathcal{F}}_\bn(\bu,\bu_b)=\left \{\begin{array}{ll}
\bbf(\bu_b)\cdot\bn & \text{ if } \nabla_\bu \bbf(\bu)\cdot \bn >0\\
\bbf(\bu)\cdot\bn & \text{ else.}
\end{array}
\right .
$$
Let us show some examples, and before let us introduce some notations.

 We denote by $\mathcal{E}_h$ the set of internal edges/faces of $\mathcal{T}_h$, and by $\mathcal{F}_h$ those contained in $\partial \Omega$.  $\KK$ stands either for an element $K$ or a face/edge $e\in \mathcal{E}_h\cup \mathcal{F}_h$. The boundary faces/edges are denoted by $\Gamma$.  The mesh is assumed to be shape regular, $h_K$ represents the diameter of the element $K$. Similarly, if $e\in \mathcal{E}_h\cup \mathcal{F}_h$, $h_e$ represents its diameter.

We follow Ciarlet's definition \cite{ciarlet,ErnGuermond} of a finite element approximation: we have a set of degrees of freedom $\Sigma_K$ of linear forms acting on the set $\PP^k$ of polynomials of degree $k$ such that the linear mapping
 $$q\in \PP^k\mapsto \big (\sigma_1(q), \ldots, \sigma_{|\Sigma_K|}(q)\big )$$
 is one-to-one. The space $\PP^k$ is spanned by the basis function $\{\varphi_{\sigma}\}_{\sigma\in \Sigma_K}$  defined by
 $$\forall \sigma,\, \sigma',  \sigma(\varphi_{\sigma'})=\delta_\sigma^{\sigma'}.$$
  We have in mind either Lagrange interpolations where the degrees of freedom are associated to points in $K$, or other type of polynomials approximation such as B\'ezier polynomials where we will also do the same geometrical identification.
 Considering all the elements covering $\Omega$, the set of degrees of freedom is denoted by $\mathcal{S}$ and a generic degree of freedom  by $\sigma$. We note that for any $K$, 
 $$\forall \bx\in K, \quad \sum\limits_{\sigma\in K}\varphi_\sigma(\bx)=1.$$
 For any element $K$, $\#K$ is the number of degrees of freedom in $K$. If $\Gamma$ is a face or a boundary element, $\#\Gamma$ is also the number of degrees of freedom in $\Gamma$.

 The integer $k$ is assumed to be the same for any element.  We define 
$$\mathcal{V}^h=\bigoplus_K\{ \bv\in L^2(K), \bv_{|K}\in \PP^k\}.$$
The solution will be sought for in a  space $v_h$ that is:
\begin{itemize}
\item Either $v_h=\mathcal{V}^h$. In that case, the elements of $v_h$ can be discontinuous across internal faces/edges of $\mathcal{T}_h$. There is no conformity requirement on the mesh.
\item Or  $v_h=\mathcal{V}_h\cap C^0(\Omega)$ in which case the mesh needs to be conformal.
\end{itemize}

We also need to integrate functions. This is done via quadrature formula, and the symbol $\oint$ used in volume integrals
$$\oint_K v(\bx)\; d\bx$$
or boundary integrals
$$
\oint_{\partial K} v(\bx)\; d\gamma$$
means that these integrals are done via user defined numerical quadratures.

 If $e\in \mathcal{E}_h$, represents any  \emph{internal} edge, i.e. $e\subset K\cap K^+$ for two elements $K$ and $K^+$,  we define for any function $\psi$ the jump  $[\nabla \psi ]=\nabla \psi_{|K}-\nabla \psi_{| K^+}$. Here the choice of $K$ and $K^+$ is important and will become clear in each example. Similarly, $\{\bv\}=\tfrac{1}{2}\big (\bv_{|K}+\bv_{|K^+}\big )$.
 
 If $\mathbf{x}$ and $\mathbf{y}$ are two vectors of $\R^q$, for $q$ integer, $\langle \bx,\by\rangle$ is their scalar product. In some occasions, it can also be denoted as $\bx\cdot\by$ or $\bx^T\by$.  We also use  $\bx\cdot\by$ when $\bx$ is a matrix and $\by$ a vector: it is simply the matrix-vector multiplication.

%In sections \ref{flux} and \ref{sec:entropy}, we have to deal with oriented graph. Given two vertices of this graph $\sigma$ and $\sigma'$, we write $\sigma>\sigma'$ to say that $[\sigma,\sigma']$ is a direct edge.

\bigskip

The first example is the SUPG scheme, originally designe by T. J. Hughes and collaborators. A variant of it is used as a production code in Dassault, where shocks need to be considered \ldots
The formulation is: find $\bu_h\in v_h=\mathcal{V}_h\cap C^0(\Omega)$ such that for any $\bw_h\in v_h$, 
\begin{equation}\label{SUPG}
\begin{split}
a(\bu_h,\bw_h)&:=-\int_\Omega \nabla \bw_h\cdot \bF(\bu_h)\; d\bx+\sum\limits_{K\subset \Omega}h_K\int_K\big [ \nabla \bF(\bu_h)\cdot\nabla \bw_h\big ] \; \tau_K\; \big [\nabla\bF(\bu_h)\cdot \nabla \bu_h\big ] d\bx\\
&\qquad +\int_{\partial \Omega} \bw_h \big (\bbF_\bn(\bu_h,\bu_b)-\bF(\bu_h)\cdot\bn\big ) \;d\gamma . 
\end{split}
\end{equation}
here $\tau_K$ is a strictly positive parameter, and it specific design is at the core of the method (for stability reasons). In \eqref{SUPG}, the first term of the right hand side is the Galerkin term. It is obtained by multiplying \eqref{eq1} by the test function $\bw_h$, apply the divergence theorem and take into account the continuity of the elements of $v_h$ accross the edges of the mesh. The last term corresponds to the boundary conditions. The second term is a stability term: if $\bw_h=\bu_h$, this term is positive, but cancel if $\nabla\bF(\bu_h)\cdot \nabla \bu_h=0$, so that the residual property is met: the Galerkin term also vanishes (up-to the boundary terms), if $\nabla\bF(\bu_h)\cdot \nabla \bu_h=0$.

A variant of the SUPG scheme  is obtained when one changes the stabilisation term:
\begin{equation}\label{Jump}
\begin{split}
a(\bu_h,\bw_h)&:=-\int_\Omega \nabla \bw_h\cdot \bF(\bu_h)\; d\bx+\sum\limits_{e \subset \Omega}\theta_e h_K^2\int_e \big [ \nabla \bw_h \big ]\cdot \big [ \nabla \bu_h\big ] \; d\gamma \\
&\qquad +\int_{\partial \Omega} \bw_h \big (\bbF_\bn(\bu_h,\bu_b)-\bF(\bu_h)\cdot\bn\big ) \; d\gamma . \qquad \theta_e>0
\end{split}
\end{equation}
Here, the residual property is kept, up to boundary terms, for smooth solutions, since the jump term cancels, see \cite{burman} for details.

In these two cases, there is no clear flux formulation. One can also say the same for the discontinuous Galerkin schemes,
where we look for $\bu_h, \bv_h\in v_h=\mathcal{V}^h$ such that
\begin{equation}\label{DG:var}
a(\bu_h,\bv_h):=\sum\limits_{K\subset \Omega}\bigg ( -\int_K\nabla\bv_h\cdot\bbf(\bu_h) d\bx+\int_{\partial K}\bv_h\cdot \hat{\bbf}_\bn(\bu^{h},\bu^{h,-}) \;d\gamma \bigg ).
\end{equation}

 If a flux formulation is obvious for the averages of the conserved variables, this corresponds to only one degree of freedom. One can easily construct isomorphisms between 
$\PP^k$ and $\R^{\text{dim }\PP^k}$: for this, one can split the elements $K$ into $\text{dim }\PP^k$ non overlapping control volumes and in general the mapping between $\PP^k$ and the $\text{dim }\PP^k$ dimensional vector consisting of the average of the elements of $\PP^k$ on these control volume will be one-to one. This means that one can reformulate the discontinuous Galerkin scheme as a scheme actioning not on $\mathcal{V}^h$ but on a direct sum of copies of $\R^{\text{dim }\PP^k}$. One one hand one would expect a natural finite volume formulation of the method, from geometry, on the other side, this formulation is not clear, though this kind of idea have already be used for hexahedral meshes in \cite{zbMATH06725695} or the DGSEM schemes, see for example \cite{zbMATH06798170}.

\section{A different point of view}
Looking again at the schemes \eqref{Jump} and \eqref{SUPG}, we see that we can rewrite them as:
\begin{equation}
\label{RDS}
\sum_{K| \sigma\in K}\Phi_\sigma^K(\bbu_h)+\sum_{f\subset\partial \Omega|\sigma\in f}\Psi_\sigma^f(\bbu_h)=0
\end{equation}
where the element and boundary residuals $\Phi_\sigma^K(\bbu_h)$ and $\Psi_\sigma^f(\bbu_h)$ are defined as follows:
\begin{itemize}
\item Case of the SUPG scheme \eqref{SUPG}
 \begin{equation*}
    \begin{split}\Phi_\sigma^K(\bu_h)&=\int_{\partial K}\varphi_\sigma \bF(\bu_h)\cdot \bn \; d\gamma -\int_K \nabla \varphi_\sigma\cdot \bF(\bu_h) \; d\bx\\&+h_K
    \int_K \bigg (\nabla_\bu\bF(\bu_h)\cdot \nabla \varphi_\sigma \bigg )\tau_K \bigg (\nabla_\bu\bF(\bu_h)\cdot \nabla \bu_h \bigg )\;d\bx
    \end{split}
    \end{equation*}
    with $\tau_K>0$.
    \item Case of the scheme \eqref{Jump}
          \begin{equation*}\Phi_\sigma^K(\bu_h)=\int_{\partial K}\varphi_\sigma \bF(\bu_h)\cdot \bn\; d\gamma -\int_K \nabla \varphi_\sigma\cdot \bF(\bu_h)\; d\bx +
    \sum\limits_{e \text{ faces of }K} \frac{\theta_e}{2} h_e^2 \int_{\partial K} [\nabla \bu]\cdot [\nabla \varphi_\sigma]\; d\gamma\end{equation*}
    with $\theta_e>0$.
    \item In both cases, 
    $$\Psi_\sigma^f(\bbu_h)=\int_{f} \varphi_h \big (\bbF_\bn(\bu_h,\bu_b)-\bF(\bu_h)\cdot\bn\big ) \; d\gamma .$$
    \end{itemize}
    
    In addition, since $\sum\limits_{\sigma\in K}\varphi_\sigma=1$, we see that in both cases, for the internal elements $K$, we have
    \begin{subequations}\label{RD:cons}
    \begin{equation}
    \label{RD:cons:K}
    \Phi^K(\bbu_h):=\sum_{\sigma\in K} \Phi_\sigma^K(\bbu_h)=\int_{\partial K} \bF(\bu_h)\cdot \bn \; d\gamma,
    \end{equation}
    and for the boundary elements, we have
        \begin{equation}
    \label{RD:cons:f}
    \Psi^f(\bbu_h):=\sum_{\sigma\in K} \Psi_\sigma^f(\bbu_h)=\int_{f} \big (\bbF_\bn(\bu_h,\bu_b)-\bF(\bu_h)\cdot\bn\big ) \; d\gamma
    \end{equation}
    \end{subequations}
    
    These are not the only schemes that can be rewritten in the form \eqref{RDS}. For example, the dG scheme \eqref{DG:var} can be rewritten as such with
    $$\Phi_\sigma^K(\bbu_h)=-\int_K\nabla\varphi_\sigma\cdot\bbf(\bu_h) d\bx+\int_{\partial K}\varphi_\sigma\cdot \hat{\bbf}_\bn(\bu^{h},\bu^{h,-}) \; d\gamma
$$
where \eqref{RD:cons:K} has to be slightly modified into
$$
  \Phi^K(\bbu_h):=\sum_{\sigma\in K} \Phi_\sigma^K(\bbu_h)=\int_{\partial K} \hat{\bbf}_\bn(\bu^{h},\bu^{h,-})\; d\gamma.
  $$

Any finite volume also has a  similar structure.  Let us start with the scheme
$$
|C_j| (u_j^{n+1}-u_j^n)+\Delta t \big (\hat{f}_{j+1/2}-\hat{f}_{j-1/2}\big )=0,$$ 
defined for the mesh $\{ x_j\}_{j\in Z}$. The control volumes are $[x_{j-1/2},x_{j+1/2}]$ with $x_{j+1/2}=\tfrac{x_j+x_{j+1}}{2}$. Here 
$|C_j$ stands for the measure of $C_j$, $|C_j|=x_{j+1/2}-x_{j-1/2}$. As in \cite{myway}, I introduce the quantities defined for the element $K_{j+1/2}=[x_j,x_{j+1}]$ whatever $j\in \Z$ by:
$$
\overrightarrow{\Phi}_{K_{j+1/2}}(u_h)=\hat{f}_{j+1/2}-f(u_j), \qquad \overleftarrow{\Phi}_{K_{j+1/2}}(u_h)=f(u_{j+1})-\hat{f}_{j+1/2},$$
where $u_h$ is the piecewise linear interpolant of the $\{u_l\}_{l\in \Z}$ at the mesh nodes (hence we change interpretation)
we see that the finite volume scheme can be rewritten as 
$$
|C_j| (u_j^{n+1}-u_j^n)+\Delta t \big (\overrightarrow{\Phi}_{K_{j+1/2}}(u_h)+ \overleftarrow{\Phi}_{K_{j-1/2}}(u_h)\big )
$$
In a way, the quantity $\overrightarrow{\Phi}_{K_{j+1/2}}(u_h)$ is the "amount" of information sent by $K_{j+1/2}$ to the vertex $j$, while $\overleftarrow{\Phi}_{K_{j+1/2}}(u_h)$ is the "amount" of information sent by $K_{j+1/2}$ to the vertex $j+1$. Since the vertex $j$ belongs to $K_{j+1/2}$ and $K_{j-1/2}$, we just add the two pieces of information. Now going back to the element $K_{j+1/2}$, adding together the two pieces of informations, we get the total information, i.e.
$$\overrightarrow{\Phi}_{K_{j+1/2}}(u_h)+\overleftarrow{\Phi}_{K_{j+1/2}}(u_h)=f(u_{j+1})-f(u_j)=\int_{\partial K_{j+1/2}} f(u_h)\cdot \bn \; d\gamma$$
with some abuse of language.

\bigskip

This construction can be extended to any volume, and the key fact is that the volumes are closed, so that the integral of the outward unit normal to the control volume vanishes. Let us be more explicit, and we choose a 2D example.

Consider  a conformal mesh, the vertices are $\bx_i$, and the elements are generically denoted by $K$. For simplicity, we assume that $K$ is a simplex, so it is convex and we can consider its centroid. For any face, we consider again the centroid, and we connect all this in the same way as on

Here, we rephrase \cite{Abgrall99}. The notations are defined in Figure \ref{fig:fv}.
\begin{figure}[h]
\begin{center}
\subfigure[]{\includegraphics[width=0.45\textwidth]{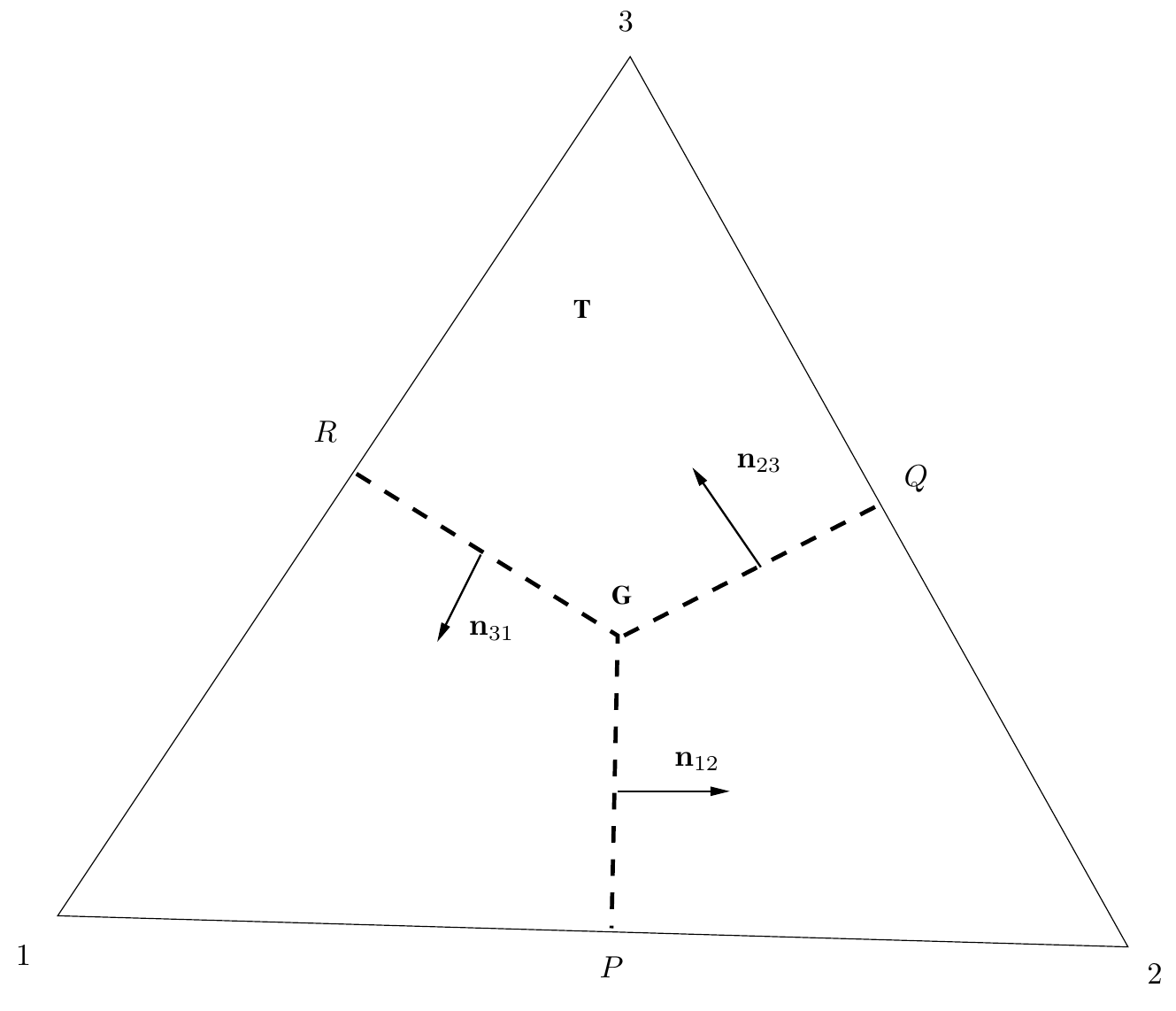}}
\subfigure[]{\includegraphics[width=0.45\textwidth]{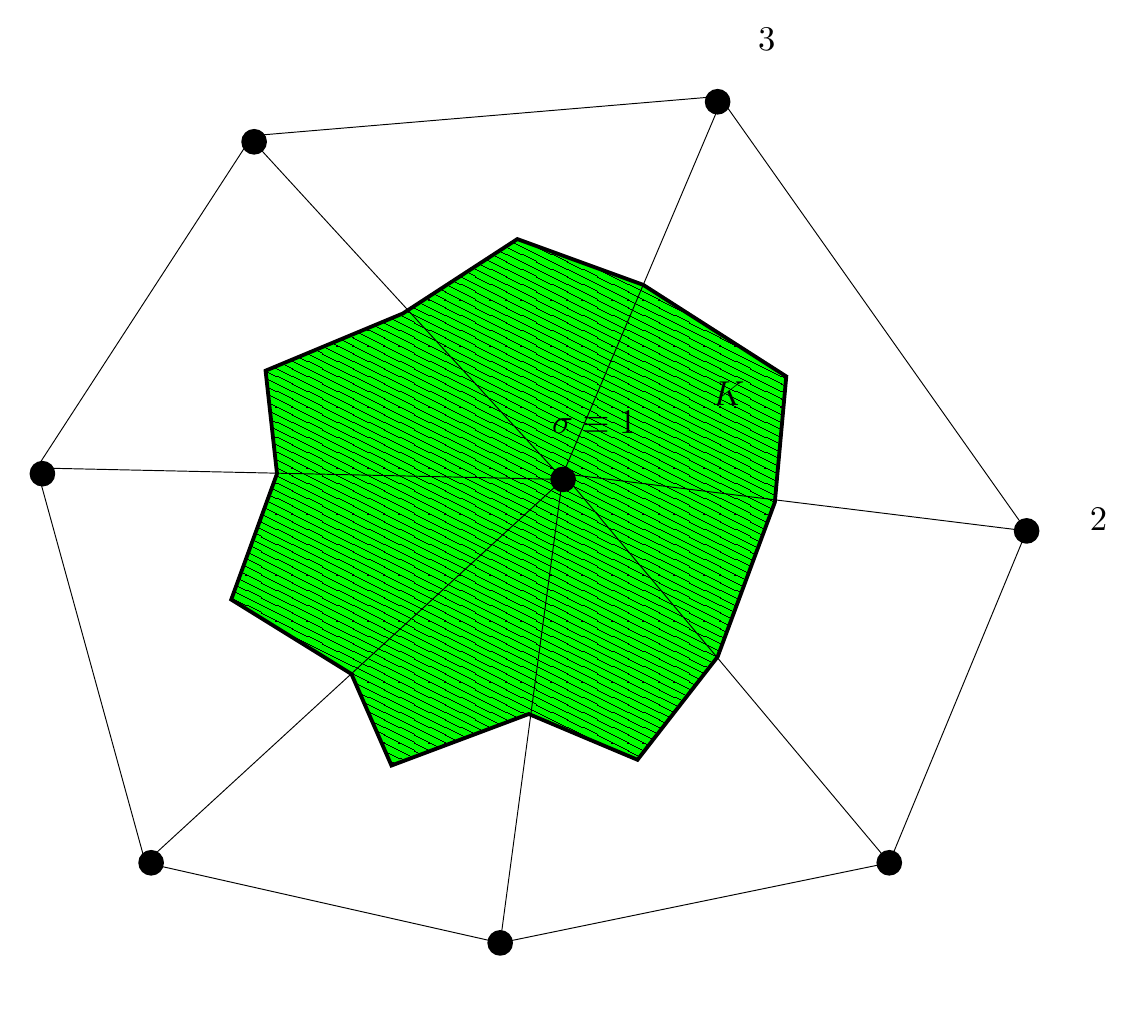}}
\end{center}
\caption{\label{fig:fv} Notations for the finite volume schemes. On the left: definition of the control volume for the degree of freedom $\sigma$.
 The vertex $\sigma$ plays the role of the vertex $1$ on the left picture for the triangle K. The control volume $C_\sigma$ associated to $\sigma=1$ is green on the right and corresponds to $1PGR$ on the left. The vectors $\bn_{ij}$ are normal to the internal edges scaled by the corresponding edge length}
\end{figure}
Again, we specialize ourselves to the case of triangular elements, but  \emph{exactly the same arguments} can be given for more general elements,
 provided a conformal approximation space can be constructed. This is  the case for
triangle elements, and we can take $k=1$.

The control volumes in this case are defined as the median cell, see figure \ref{fig:fv}.
We concentrate on the  approximation of $\text{div } \bbf$, see equation \eqref{eq1}.   Since the boundary of $C_\sigma$ is a closed polygon, the scaled outward normals $\bn_\gamma$ to $\partial C_\sigma$ sum up to 0:
$$
\sum_{\gamma \subset \partial C_\sigma}\bn_\gamma=0$$
where $\gamma$ is any of the segment included in $\partial C_\sigma$, such as $PG$ on Figure \ref{fig:fv}. 
Hence
\begin{equation*}
\begin{split}
\sum_{\gamma \subset \partial C_\sigma} \hbbf_{\bn_\gamma }(\bu_\sigma , \bu^-& )= \sum_{\gamma \subset \partial C_\sigma} \hbbf_{\bn_\gamma }(\bu_\sigma, \bu^- )- \bigg (\sum_{\gamma \subset \partial C_\sigma}\bn_\gamma\bigg )\cdot \bbf (\bu_\sigma)\\
&=\sum\limits_{K, \sigma\in K} \sum\limits_{\gamma \subset \partial C_\sigma\cap K} \big ( \hbbf_{\bn_\gamma }(\bu_\sigma, \bu^- )-\bbf (\bu_\sigma)\cdot \bn_\gamma \big )
\end{split}
\end{equation*}
To make things explicit, in $K$, the internal boundaries are $PG$, $QG$ and $RG$, and those around $\sigma\equiv 1$ are $PG$ and $RG$.
We set
\begin{equation}
\begin{split}
\Phi_\sigma^K(\bu_h)&=\sum\limits_{\gamma\subset \partial C_\sigma\cap K} \big ( \hbbf_{\bn_\gamma }(\bu_\sigma, \bu^- )-\bbf (\bu_\sigma)\cdot \bn_\gamma \big )\\
&=\sum\limits_{\gamma\subset \partial ( C_\sigma\cap K )}  \hbbf_{\bn_\gamma }(\bu_\sigma, \bu^- ).
\end{split}
\label{fv:res:sigma}
\end{equation}
The last relation uses the consistency of the flux and the fact that $C_\sigma\cap K$ is a closed polygon. The quantity $\Phi_\sigma^K(\bu_h)$ is the normal flux on $C_\sigma\cap K$.
If now we sum up these three quantities and get:
\begin{equation*}
\begin{split}
\sum_{\sigma\in K} \Phi_\sigma^K(\bu_h)&= \bigg ( \hbbf_{\bn_{12}}(\bu_1,\bu_2)-\hbbf_{\bn_{13}}(\bu_1,\bu_3)-\bbf(\bu_1)\cdot\bn_{12}+\bbf(\bu_1)\cdot\bn_{31}\bigg )\\
&+\bigg ( \hbbf_{\bn_{23}}(\bu_2,\bu_3)-\hbbf_{\bn_{12}}(\bu_2,\bu_1)+\bbf(\bu_2)\cdot\bn_{12}-\bbf(\bu_2)\cdot\bn_{23}\bigg )\\
&+\bigg ( -\hbbf_{\bn_{23}}(\bu_3,\bu_2)+\hbbf_{\bn_{31}}(\bu_3,\bu_1)-\bbf(\bu_3)\cdot\bn_{23}+\bbf(\bu_3)\cdot\bn_{31}\bigg )\\
&= \bbf(\bu_1)\cdot \big ( \bn_{12}-\bn_{31}\big ) +\bbf(\bu_2)\cdot \big ( -\bn_{23}+\bn_{31}\big )
+\bbf(\bu_3)\cdot \big ( \bn_{31}-\bn_{23}\big )\\
&=\bbf(\bu_1)\cdot\frac{\bn_1}{2}+\bbf(\bu_2)\cdot\frac{\bn_2}{2}+\bbf(\bu_3)\cdot\frac{\bn_3}{2}
\end{split}
\end{equation*}
where $\bn_j$ is the scaled inward normal of the edge opposite to vertex $\sigma_j$, i.e. twice the gradient of the $\PP^1$ basis function
 $\varphi_{\sigma_j}$ associated to this degree of freedom.
Thus, we can reinterpret the sum as the boundary integral of the Lagrange interpolant of the flux.
The finite volume scheme is then a residual distribution scheme with residual defined by \eqref{fv:res:sigma}
and a total residual defined by
\begin{equation}
\label{fv:tot:residu}
\Phi^K:=\int_{\partial K} \bbf^h\cdot \bn , \qquad \bbf^h=\sum_{\sigma\in K} \bbf(\bu_\sigma)\varphi_\sigma.
\end{equation}

\subsection{The residual distribution point of view}
%def, thm, entropy

As we said, this point of view was first introduced by P.L. Roe in his seminal 1981 paper, \cite{roe1981}, and then further developped for several dimensions in \cite{DeconinckRoeStruijs}. However, from an historical view point, one of the very first multidimensional residual distribution paper was written by Ni, an engineer at Bombardier, see \cite{Ni:81}.
\begin{definition}[Residual distribution schemes]
Considering \eqref{eq:1}, and a mesh of $\Omega$ made of simplices $K$, we will say that a scheme is a residual distribution scheme if one approximates the solution $\bbu$ of \eqref{eq:1} by $\bbu_h\in v_h$ $v_h$ is the set of functions that are polynomials of degree $k$ on each element $K$ and  globally continuous or not by the scheme \eqref{RDS} where the residuals $\Phi_\sigma^K(\bbu_h)$ and the boundary residuals $\Psi_\sigma^f(\bbu_h)$ satisfy the conservation relations \eqref{RD:cons:K} and \eqref{RD:cons:f}.
\end{definition}

One can show and is a generalisation of the classical Lax-Wendroff theorem, see \cite{AbgrallRoe}.
\begin{theorem}\label{th:LW}
Assume the family of meshes $\mathcal{T}=(\mathcal{T}_h)$
%_{h\in \HH}$ 
is shape regular. We assume that the residuals $\{\Phi_\sigma^{\mathcal{K}}\}_{\sigma\in \KK}$,
 for $\mathcal{K}$ an element or a boundary element of $\mathcal{T}_h$, satisfy: \begin{itemize}
\item For any $M\in \R^+$, there exists a constant $C$ which depends only on the family of meshes $\mathcal{T}_h$ and $M$ such that
 for any $\bu_h\in v_h$ with $||\bu_h||_{\infty}\leq M$, then
$$\big|\Phi^\KK_\sigma({\bu_h}_{|\KK})\big |\leq C\sum_{\sigma, \sigma'\in \KK}|\bu_\sigma^h-\bu_{\sigma'}^h|$$
\item The conservation relations \eqref{RD:cons:K} and \eqref{RD:cons:f}.
\end{itemize}
Then if there exists a constant $C_{max}$ such that the solutions of the scheme \eqref{RDS} satisfy $||\bu_h||_{\infty}\leq C_{max}$ and a function $\bv\in L^2(\Omega)$ such that $(\bu_h)_{h}$ or at least a sub-sequence converges to $\bv$ in $L^2(\Omega)$, then $\bv$ is a weak solution of \eqref{eq1}
\end{theorem}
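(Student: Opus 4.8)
The plan is to mimic the classical Lax--Wendroff argument: start from the discrete equations \eqref{RDS}, test against a smooth compactly supported function sampled at the degrees of freedom, use the conservation relations \eqref{RD:cons:K}--\eqref{RD:cons:f} to perform a discrete integration by parts, and then pass to the limit using the $L^2$ convergence of the subsequence together with the residual bound. More precisely, I would fix $\varphi\in C^1_0(\Omega)$ and let $\varphi_\sigma$ denote its nodal values (or its interpolant $\varphi_h=\sum_\sigma \varphi_\sigma\varphi_\sigma(\bx)$ in $v_h$). Multiplying the scheme \eqref{RDS} at each internal degree of freedom $\sigma$ by $\varphi_\sigma$ and summing over all $\sigma$ gives
$$
\sum_\sigma \varphi_\sigma\Bigg(\sum_{K\mid \sigma\in K}\Phi_\sigma^K(\bu_h)+\sum_{f\mid \sigma\in f}\Psi_\sigma^f(\bu_h)\Bigg)=0,
$$
which, after exchanging the order of summation, becomes $\sum_K \sum_{\sigma\in K}\varphi_\sigma\Phi_\sigma^K(\bu_h)+\sum_f\sum_{\sigma\in f}\varphi_\sigma\Psi_\sigma^f(\bu_h)=0$.

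The key algebraic step is to split $\varphi_\sigma$ inside the element sum as $\varphi_\sigma=\bar\varphi_K+(\varphi_\sigma-\bar\varphi_K)$, where $\bar\varphi_K$ is, say, the value of $\varphi$ at the centroid of $K$ (or the mean of the nodal values). The "constant" part collapses by conservation: $\sum_{\sigma\in K}\bar\varphi_K\Phi_\sigma^K(\bu_h)=\bar\varphi_K\,\Phi^K(\bu_h)=\bar\varphi_K\int_{\partial K}\bbf(\bu_h)\cdot\bn\,d\gamma$ by \eqref{RD:cons:K}, and similarly for the boundary residuals via \eqref{RD:cons:f}. Summing the element-boundary integrals against the piecewise-constant $\bar\varphi_K$ and reorganizing by internal faces produces a discrete divergence-form expression which, after a second summation by parts, is recognized as a Riemann-sum approximation of $-\int_\Omega\nabla\varphi\cdot\bbf(\bv)\,d\bx+\int_{\partial\Omega}\varphi\,(\bbF_\bn(\bv,\bu_b)-\bbf(\bv)\cdot\bn)\,d\gamma$; here I use shape-regularity to control the geometric factors, the consistency of the fluxes, and the $L^2$ (hence, up to a further subsequence, a.e.) convergence $\bu_h\to\bv$ together with the continuity and at-most-linear growth of $\bbf$ and $\bbF_\bn$ and dominated convergence. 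The "fluctuation" part $\sum_{\sigma\in K}(\varphi_\sigma-\bar\varphi_K)\Phi_\sigma^K(\bu_h)$ is bounded, using the residual estimate $|\Phi_\sigma^K(\bu_h)|\le C\sum_{\sigma,\sigma'\in K}|\bu^h_\sigma-\bu^h_{\sigma'}|$ and $|\varphi_\sigma-\bar\varphi_K|\le Ch_K\|\nabla\varphi\|_\infty$, by
$$
\Bigg|\sum_K\sum_{\sigma\in K}(\varphi_\sigma-\bar\varphi_K)\Phi_\sigma^K(\bu_h)\Bigg|\le C\|\nabla\varphi\|_\infty\sum_K h_K\sum_{\sigma,\sigma'\in K}|\bu^h_\sigma-\bu^h_{\sigma'}|,
$$
and one shows this tends to $0$: writing $|\bu^h_\sigma-\bu^h_{\sigma'}|$ in terms of $\nabla\bu_h$ on $K$ and using shape-regularity, the sum is controlled by $\big(\sum_K h_K^{d}\big)^{1/2}\big(\sum_K\|\nabla\bu_h\|_{L^2(K)}^2 h_K^{2-d}\cdot h_K^{d}\big)^{1/2}$-type bookkeeping; more robustly, since $\bu_h\to\bv$ in $L^2$ with $\bv$ merely in $L^2$, one argues by density, approximating $\bv$ by a smooth $\bv_\epsilon$, handling the smooth part by a direct $O(h)$ estimate and the remainder $\bu_h-\bv_\epsilon$ by the $L^2$ bound, so the fluctuation term is $o(1)$ as $h\to0$ after $\epsilon\to0$.

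The main obstacle I expect is precisely this last point: making rigorous that the fluctuation term vanishes without assuming any $BV$ or discrete $H^1$ bound on $\bu_h$. One only has $L^2$ convergence and the pointwise residual bound, so a naive Cauchy--Schwarz loses powers of $h$ unless one tracks the scaling carefully; the clean way is to use that $\varphi-\varphi_h$ and its discrete gradient are $O(h)$ uniformly, rewrite the whole discrete identity as $\langle\text{something}(\bu_h),\text{test}(\varphi_h)\rangle$, and then exploit that $\bu_h$ is uniformly bounded in $L^\infty$ so that $\bbf(\bu_h)$ and $\bbF_\bn(\bu_h,\bu_b)$ are uniformly bounded and converge a.e.\ — this converts the limit into an application of dominated convergence on fixed integrals rather than a delicate estimate of discrete gradients. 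I would also be careful with the boundary terms: the consistency requirement on $\bbF_\bn$ and the assumption that $\partial\Omega$ is non-characteristic with polygonal shape ensure the boundary residual sums converge to the correct boundary integral in \eqref{weak:eq1}, and I would treat the degrees of freedom lying on $\partial\Omega$ separately from the interior ones when summing against $\varphi$ (recalling $\varphi\in C^1_0(\Omega)$ makes the boundary contribution of the interior identity vanish in the limit, while the genuinely boundary residuals $\Psi^f$ supply the flux-mismatch term).
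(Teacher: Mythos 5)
Your overall architecture --- test the scheme with the nodal values $\varphi_\sigma$, swap the sums, split $\varphi_\sigma=\bar\varphi_K+(\varphi_\sigma-\bar\varphi_K)$, let the conservation relations collapse the constant part onto $\Phi^K=\int_{\partial K}\bbf(\bu_h)\cdot\bn\,d\gamma$, and pass to the limit by dominated convergence --- is exactly that of the proof the paper points to (it does not reproduce it, but defers to \cite{AbgrallRoe}). The treatment of the constant part, of the boundary residuals, and the use of the $L^\infty$ bound to invoke dominated convergence are all as in the reference.

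The gap is in the fluctuation term, precisely the step you yourself single out as the main obstacle: your proposed fix does not close it for $d\ge 2$. You bound this term by $C\|\nabla\varphi\|_\infty\sum_K h_K\sum_{\sigma,\sigma'\in K}|\bu_\sigma^h-\bu_{\sigma'}^h|$ and claim this is $o(1)$ by density. Test that claim on the smooth part of your own decomposition: if $\bu_h$ is (close to) the interpolant of a smooth nonconstant $\bv_\epsilon$, then $\sum_{\sigma,\sigma'\in K}|\bu_\sigma^h-\bu_{\sigma'}^h|\sim h_K|\nabla\bv_\epsilon|$, so each element contributes $O(h_K^2)$ and the sum over the $O(h^{-d})$ elements meeting $\mathrm{supp}\,\varphi$ is $O(h^{2-d})$. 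This is $O(h)$ only when $d=1$; in two dimensions it is $O(1)$ (comparable to $\|\nabla\varphi\|_\infty\int|\nabla\bv_\epsilon|\,d\bx$), and in three it diverges. Since the obstruction already occurs for smooth limits, no $L^2$-convergence or density argument can remove it: with the Lipschitz bound used exactly as stated, the fluctuation term simply does not vanish. What actually makes the argument work is the natural scaling of the residuals,
\begin{equation*}
\big|\Phi^K_\sigma(\bu_h)\big|\;\leq\; C\,h_K^{d-1}\sum_{\sigma,\sigma'\in K}\big|\bu_\sigma^h-\bu_{\sigma'}^h\big|,
\end{equation*}
which the Galerkin, SUPG, jump and Rusanov residuals of the paper all satisfy (they are of the size of $\int_K\varphi_\sigma\,\nabla_\bu\bbf(\bu_h)\cdot\nabla\bu_h\,d\bx$ plus stabilisation, and $\|\nabla\bu_h\|_{L^1(K)}\leq Ch_K^{d-1}\max_{\sigma,\sigma'}|\bu_\sigma^h-\bu_{\sigma'}^h|$). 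With that extra factor the fluctuation term is controlled by $C\sum_K h_K^{d}\,\mathrm{osc}_K(\bu_h)\leq C\big(\sum_K|K|\big)^{1/2}\big(\sum_K|K|\,\mathrm{osc}_K(\bu_h)^2\big)^{1/2}$, and $\sum_K|K|\,\mathrm{osc}_K(\bu_h)^2\sim\|\bu_h-\overline{\bu}_h\|^2_{L^2(\Omega)}\to 0$ (with $\overline{\bu}_h$ the elementwise mean) follows from the $L^2$ convergence by exactly the approximation argument you describe. So keep your plan, but either restrict it to $d=1$ or import the $h_K^{d-1}$-scaled residual estimate; the same correction is needed when you estimate $\bar\varphi_K-\varphi$ against $\int_{\partial K}\bbf(\bu_h)\cdot\bn\,d\gamma$ while converting the constant part into $-\int_\Omega\nabla\varphi\cdot\bbf(\bu_h)\,d\bx$.
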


An immediate side result is the following result on entropy inequalities:
\begin{proposition}\label{th:entropy}
Let $(U,\mathbf{g})$ be a  entropy-flux couple for \eqref{eq1} and $\hat{\mathbf{g}}_\bn$ be a numerical entropy flux consistent with $\mathbf{g}\cdot \bn$. Assume that the residuals satisfy:
for any element $K$,
\begin{subequations}\label{entropy}
\begin{equation}\label{entropy:1}
\sum_{\sigma \in K}\langle\nabla_\bu U(\bu_\sigma), \Phi_\sigma^K\rangle \geq \int_{\partial K} \hat{\mathbf{g}}_\bn(\bu_h,\bu^{h,-}) \; d\gamma
\end{equation}
and for any boundary edge $e$,
\begin{equation}\label{entropy:2}
\sum_{\sigma \in e}\langle\nabla_\bu U(\bu_\sigma) ,  \Phi_\sigma^e\rangle  \geq \int_{e} \big (\hat{\mathbf{g}}_\bn(\bu_h,\bu_b)- \mathbf{g}(\bu_h)\cdot \bn \big )\; d\gamma.
\end{equation}
\end{subequations}
Then, under the assumptions of theorem \ref{th:LW}, the limit weak solution also satisfies the following entropy inequality: for any $\varphi\in C^1(\overline{\Omega})$, $\varphi\geq 0$, 
$$-\int_\Omega \nabla \varphi\cdot \mathbf{g}(\bu) \; d\bx+\int_{\partial\Omega^-}\varphi\;\mathbf{g}(u_b)\cdot \bn  \; d\gamma \leq 0.$$
\end{proposition}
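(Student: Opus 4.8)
The plan is to run the Lax--Wendroff argument of Theorem~\ref{th:LW}, but \emph{contracted against the entropy variables}, using the sign of the test function to turn the conservation identities into the one-sided bounds \eqref{entropy:1}--\eqref{entropy:2}. First I would fix $\varphi\in C^1(\overline{\Omega})$ with $\varphi\geq 0$, denote by $\varphi_\sigma$ the value of $\varphi$ at the degree of freedom $\sigma$ (i.e.\ $\sigma(\varphi_h)$), take the $\sigma$-equation of \eqref{RDS}, pair it with $\nabla_\bu U(\bu_\sigma^h)$, multiply by $\varphi_\sigma\geq 0$, and sum over all $\sigma$. Regrouping the double sum by elements $K$ and by boundary faces $f$ gives
\[
\sum_{K}\sum_{\sigma\in K}\varphi_\sigma\,\langle\nabla_\bu U(\bu_\sigma^h),\Phi_\sigma^K(\bu_h)\rangle
\;+\;\sum_{f\subset\partial\Omega}\sum_{\sigma\in f}\varphi_\sigma\,\langle\nabla_\bu U(\bu_\sigma^h),\Psi_\sigma^f(\bu_h)\rangle\;=\;0 .
\]

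Next I would localise $\varphi$ elementwise: on each $K$ pick a single nonnegative value $\bar\varphi_K$ (e.g.\ $\varphi$ at the centroid), so $|\varphi_\sigma-\bar\varphi_K|\leq C\,h_K||\nabla\varphi||_\infty$, and likewise $\bar\varphi_f$ on each boundary face. Writing $\varphi_\sigma=\bar\varphi_K+(\varphi_\sigma-\bar\varphi_K)$, the ``constant'' part of the $K$-sum equals $\bar\varphi_K\sum_{\sigma\in K}\langle\nabla_\bu U(\bu_\sigma^h),\Phi_\sigma^K\rangle$, which by \eqref{entropy:1} and $\bar\varphi_K\geq 0$ is $\geq\bar\varphi_K\int_{\partial K}\hat{\mathbf{g}}_\bn(\bu_h,\bu^{h,-})\,d\gamma$; boundary faces are handled the same way using \eqref{entropy:2}. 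The leftover ``fluctuation'' parts are commutation errors: since $||\bu_h||_{\infty}\leq C_{max}$, $\nabla_\bu U$ is (uniformly) continuous, hence bounded, on the compact range of the $\bu_h$, while the first hypothesis of Theorem~\ref{th:LW} gives $|\Phi_\sigma^K|\leq C\sum_{\sigma,\sigma'\in K}|\bu_\sigma^h-\bu_{\sigma'}^h|$; hence each such term is $O(h_K)\sum_{\sigma,\sigma'\in K}|\bu_\sigma^h-\bu_{\sigma'}^h|$ and tends to $0$ by exactly the estimate already used to prove Theorem~\ref{th:LW} (this is the step that consumes the $L^2$ convergence of $(\bu_h)$). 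What survives is the one-sided inequality
\[
0\;\geq\;\sum_{K}\bar\varphi_K\int_{\partial K}\hat{\mathbf{g}}_\bn(\bu_h,\bu^{h,-})\,d\gamma
\;+\;\sum_{f\subset\partial\Omega}\bar\varphi_f\int_f\big(\hat{\mathbf{g}}_\bn(\bu_h,\bu_b)-\mathbf{g}(\bu_h)\cdot\bn\big)\,d\gamma\;+\;o(1).
\]

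Finally I would do a discrete integration by parts on the first sum, splitting it into interior and boundary faces. On an interior face $e=K\cap K^+$ the two adjacent contributions combine, using the conservativity (single-valuedness across faces) and consistency of $\hat{\mathbf{g}}_\bn$ together with the continuity of $\varphi$, into $\int_e(\bar\varphi_K-\bar\varphi_{K^+})\,\hat{\mathbf{g}}_{\bn_e}(\bu_h,\bu^{h,-})\,d\gamma$; since $|\bar\varphi_K-\bar\varphi_{K^+}|=O(h)$ and $\hat{\mathbf{g}}_\bn(\bw,\bw)=\mathbf{g}(\bw)\cdot\bn$, the interior sum converges, as $h\to 0$ and $\bu_h\to\bv$ in $L^2$, to $-\int_\Omega\nabla\varphi\cdot\mathbf{g}(\bv)\,d\bx$ (the same mechanism as in Theorem~\ref{th:LW}). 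On the boundary faces the term $-\mathbf{g}(\bu_h)\cdot\bn$ cancels the boundary piece coming from the first sum, leaving $\sum_f\bar\varphi_f\int_f\hat{\mathbf{g}}_\bn(\bu_h,\bu_b)\,d\gamma$, and since $\hat{\mathbf{g}}_\bn$ is the consistent upwind numerical entropy flux this converges to $\int_{\partial\Omega^-}\varphi\,\mathbf{g}(u_b)\cdot\bn\,d\gamma$ on the inflow part $\partial\Omega^-$ (the outflow part, where the upwind state is interior, being reabsorbed into the volume integral). Inserting these limits into the displayed inequality yields the asserted entropy inequality.

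The hard part will be this last step, namely the bookkeeping of the boundary terms: the numerical boundary entropy fluxes occur twice — once through \eqref{entropy:1} restricted to the boundary edges of $K$, once through \eqref{entropy:2} — the boundary condition is only imposed weakly, and one has to verify that after the cancellation the upwind numerical entropy flux really limits to $\mathbf{g}(u_b)\cdot\bn$ on $\partial\Omega^-$ while the outflow contribution merges cleanly with $-\int_\Omega\nabla\varphi\cdot\mathbf{g}(\bv)$, and that the commutation errors die near $\partial\Omega$ as well. Everything else is a transcription of the proof of Theorem~\ref{th:LW} with the extra bounded factor $\nabla_\bu U(\bu_\sigma^h)$ inserted and equalities relaxed to inequalities thanks to $\varphi\geq 0$.
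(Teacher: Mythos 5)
Your proposal is correct and follows exactly the route the paper intends: the paper offers no written proof of this proposition (it is announced as an ``immediate side result'' of Theorem~\ref{th:LW}, whose own proof is deferred to the references), and the standard argument is precisely yours --- contract the scheme against $\varphi_\sigma\nabla_\bu U(\bu_\sigma)$ with $\varphi\geq 0$, use \eqref{entropy:1}--\eqref{entropy:2} to replace equalities by one-sided bounds, and rerun the Lax--Wendroff summation-by-parts and limit passage, with the commutation errors controlled by the Lipschitz bound on the residuals. You also correctly identify the only delicate point (the boundary bookkeeping and the cancellation of $\mathbf{g}(\bu_h)\cdot\bn$ between the element and boundary residuals), so nothing essential is missing.
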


Instead of considering conservation at the level of the internal faces of the mesh, we consider it at the level of the elements. This opens new perspectives, and we will show some of them in the sequel

\bigskip
\subsection{New examples}
Using this point of view, and following the pionneering work of P.L Roe, one can define
  the limited  Residual Distributive Schemes, see \cite{abgrallLarat,abgralldeSantisSISC}, namely
    \begin{equation}
    \label{schema RDS}\Phi_\sigma^K(\bu_h)=\beta_\sigma \int_{\partial K}\bF(\bu_h)\cdot \bn\; d\gamma
    \end{equation}
    or 
    \begin{equation}
    \label{schema RDS SUPG}\Phi_\sigma^K(\bu_h)=\beta_\sigma \int_{\partial K}\bF(\bu_h)\cdot \bn\; d\gamma+h_K
    \int_K \bigg (\nabla_\bu\bF(\bu_h)\cdot \nabla \varphi_\sigma \bigg )\tau \bigg (\nabla_\bu\bF(\bu_h)\cdot \nabla \bu_h \bigg )\;d\bx
    \end{equation}
or
\begin{equation}
\label{schema RDS jump}\Phi_\sigma^K(\bu_h)=\beta_\sigma \int_{\partial K}\bF(\bu_h)\cdot \bn\; d\gamma+
    \Gamma \;h_K^2 \int_{\partial K} [\nabla \bu_h]\cdot [\nabla \varphi_\sigma]\; d\gamma.\end{equation}
   where the parameters $\beta_\sigma$ are defined to guarantee conservation,
   $$\sum\limits_{\sigma\in K} \beta_\sigma=1$$
   and such that \eqref{schema RDS SUPG} without the streamline term and \eqref{schema RDS jump} without the jump terms satisfy a discrete maximum principle. The streamline term and jump term are introduced because one can easily see that spurious modes may exist, but their role is very different compared to \eqref{SUPG} and \eqref{Jump} where they are introduced to stabilize the Galerkin scheme: if formally the maximum principle is violated, experimentally the violation is extremely small, if nonexistant. See \cite{energie,abgrallLarat} for more details. 
   
   A similar construction can be done starting from a discontinuous Galerkin scheme without non-linear stabilisation such as limiting, has been applied. This has been done in \cite{abgrall:shu} and developped further in \cite{abgrall:dgrds}. 
   
   The non-linear stability is provided by the coefficient $\beta_\sigma$ which is a non-linear function of $\bu_h$.  Possible values of $\beta_\sigma$ are described in the appendix \ref{RDS}.
   
%%%%%
Here we consider a globally continuous approximation: $\bu_h \in \mathcal{V}_h\cap C^0(\Omega)$.

Consider one element $K$. Since there is no ambiguity, the drop, for the residuals,  any reference to $K$ in the following. The total residual is defined by
$$
\Phi(\bu_h)=\int_{\partial K} \bF(\bu_h)\cdot \bn \; d\gamma,$$
and we assume to have monotone residuals $\{\Phi_\sigma^L\}_{\sigma\in K}$. By this we mean 
$$\Phi_\sigma^L(\bu_h)=\sum\limits_{\sigma'\in K} c_{\sigma\sigma'}^L(\bu_\sigma-\bu_{\sigma'})$$
with $c_{\sigma\sigma'}\geq 0$ that also satisfies
$$\sum\limits_{\sigma\in K}\Phi_\sigma^L=\Phi.$$ It can easily be shown  that the condition $c_{\sigma\sigma'}\geq 0$ garanties that the scheme is monotone under a CFL like condition. One example is 
given by the Rusanov residuals:
$$\Phi_\sigma^{Rus}(\bu_h)=-\int_K \nabla \varphi_\sigma \cdot  \bF(\bu_h)\; d\bx+\int_{\partial K} \varphi_\sigma\bbf(\bu_h)\cdot \bn\; d\gamma+\alpha (\bu_\sigma-\bar \bu),
$$
where $\bar \bu$ is the arithmetic average of of the $\bu_\sigma's$ on $K$ and $\alpha$ satisfies:
$$\alpha\geq \#K\; \max_{\sigma, \sigma'\in K} \bigg | \int_K \varphi_\sigma \nabla_\bu\bF(\bu_h)\;  d\bx \bigg |.$$
Here $\#K$ is the number of degrees of freedom in $K$.
Indeed, this residual can be rewritten as 
$$\Phi_\sigma^{Rus}(\bu_h)=\sum_{\sigma'\in K} c_{\sigma\sigma'}(\bu_\sigma-\bu_{\sigma'})$$
with
$$c_{\sigma\sigma'}=\int_K \varphi_\sigma \cdot \nabla_\bu\bF(\bu_h)\; d\bx+\dfrac{\alpha}{\#K}.$$
Under the condition above, $c_{\sigma\sigma'}\geq 0$ and hence we have a maximum principle.

The coefficients $\beta_\sigma$ introduced in the relations \eqref{schema RDS SUPG} and \eqref{schema RDS jump} are defined by:
$$
\beta_\sigma=\dfrac{\max(0,\frac{\Phi_\sigma^{L}}{\Phi})}
{
\sum\limits_{\sigma'\in K} \max(0,\frac{\Phi_{\sigma'}^{L}}{\Phi})}.
$$ and can be shown to be always defined, to  guaranty a local maximum principle for \eqref{schema RDS SUPG} and \eqref{schema RDS jump}, see \cite{abgrallLarat}.

\begin{remark}[About the coefficients $c_{\sigma\sigma'}^L$]
All the examples of monotone residual we are aware of are such that for linear problems, te $c_{\sigma\sigma'}^L$ are independant of $\bu_h$. Then one can show that for any $\sigma\in K$,
$$\sum\limits_{\sigma'\in K}\big ( c_{\sigma\sigma'}^L-c_{\sigma'\sigma}^L\big )=\int_K \nabla\bbf(\bu_h)\cdot\nabla \varphi_\sigma d\bx.$$
This relation implies the conservation relation \eqref{RD:cons:K}.
\end{remark}

%%%%%%%%%%
\section{RDS as finite volume schemes}
In this section, we show how to interpret  RD schemes as finite volume schemes. This amounts to defining control volumes and flux functions. 
We first have to define what is a flux in this context and to adapt the notion of consistency. 

Let us  consider any common edge or face $\Gamma$ of $K^+$ and $K^-$, two elements. Let $\bn$ be the normal to $\Gamma$, see Figure \ref{fig:flux}. Depending on the context, $\bn$ is a scaled normal or $||\bn||=1$.
\begin{figure}[h]
\begin{center}
\psfrag{K+}{$K^+$}
\psfrag{K-}{$K^-$}
\psfrag{n}{$\bn$}
\includegraphics[width=0.45\textwidth]{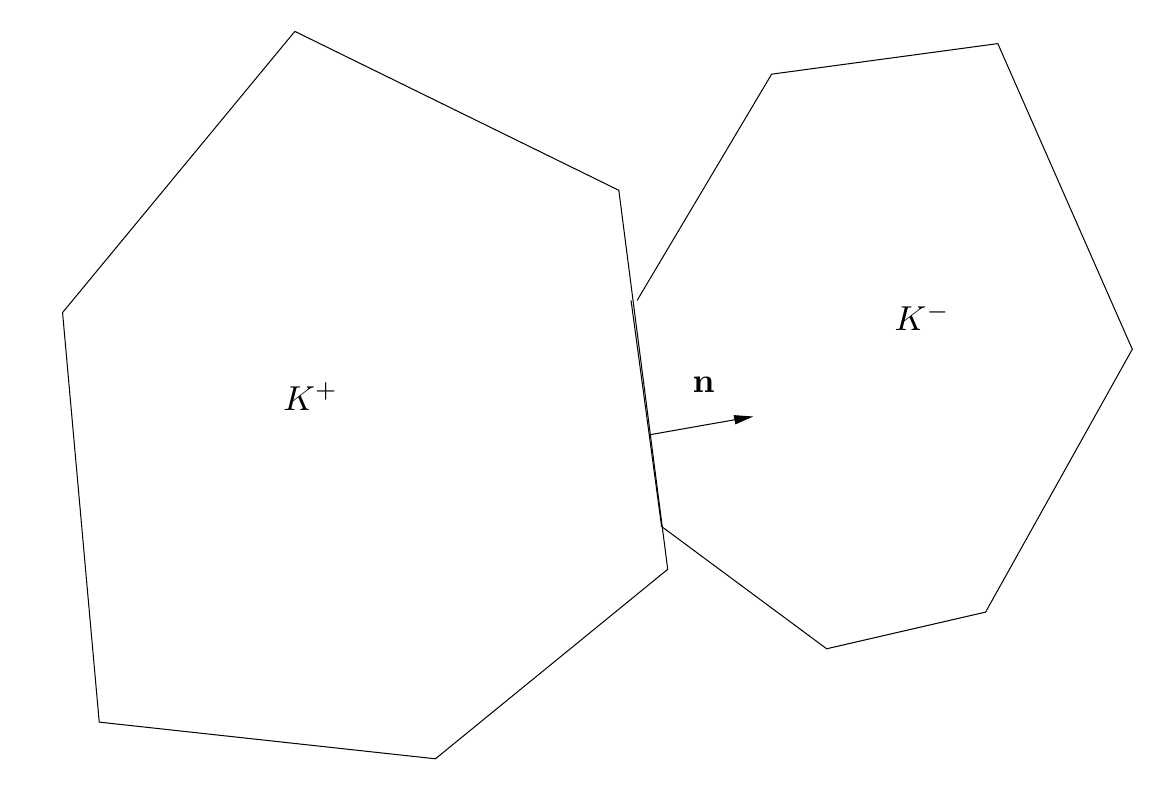}
\caption{\label{fig:flux}Geometrical setting}
\end{center}
\end{figure}
To each edge $\Gamma$ is associated a set of states $S=\{\bbu_1, \ldots , \bbu_l)$. A flux $\hbbf_\bn(S)$ between $K^+$ and $K^-$ has to satisfy
\begin{subequations}\label{flux:def}
\begin{equation}\label{flux:def:1}
\hbbf_\bn(\bbu_1, \ldots , \bbu_l)=-\hbbf_{-\bn}(\bbu_1, \ldots , \bbu_l).
\end{equation}

The consistency property that stands for the consistency is that if all the states  are identical in an element, then each of the residuals vanishes. Hence, we define a multidimensional flux as follows:
%\begin{definition}\label{MD:consistency}
 A multidimensional flux 
$$\hbbf_\bn:=\hbbf_\bn(\bu_1, \ldots , \bu_N)$$
is consistent if, when $\bu_1= \bu_2= \ldots = \bu_N=\bu$ then
\begin{equation}\label{flux:def:2}\hbbf_\bn(\bu, \ldots , \bu)=\bbf(\bu)\cdot \bn.\end{equation}
%\end{definition}
\end{subequations}
The results of this section apply to any 
finite element method but also to discontinuous Galerkin methods. There is no need for exact evaluation of integral formula (surface or boundary), so that these results apply to schemes as they are implemented.

Let  $K$ is a polytope contained in $\R^d$ with degrees of freedoms on the boundary of $K$. The set $\mathcal{S}$ is the set of degrees of freedom in $K$. We consider a triangulation $\mathcal{T}_K$ of $K$ whose vertices  are exactly the elements of $\mathcal{S}$. Choosing an orientation of $K$, it is propagated on $\mathcal{T}_K$: the edges are oriented. This is illustrated in figure \ref{graph} for a $\P^2$ triangle and a $\Q^2$ quad
\begin{figure}[h]
\begin{center}
\subfigure[$\P^2$ triangle]{\includegraphics[width=0.45\textwidth]{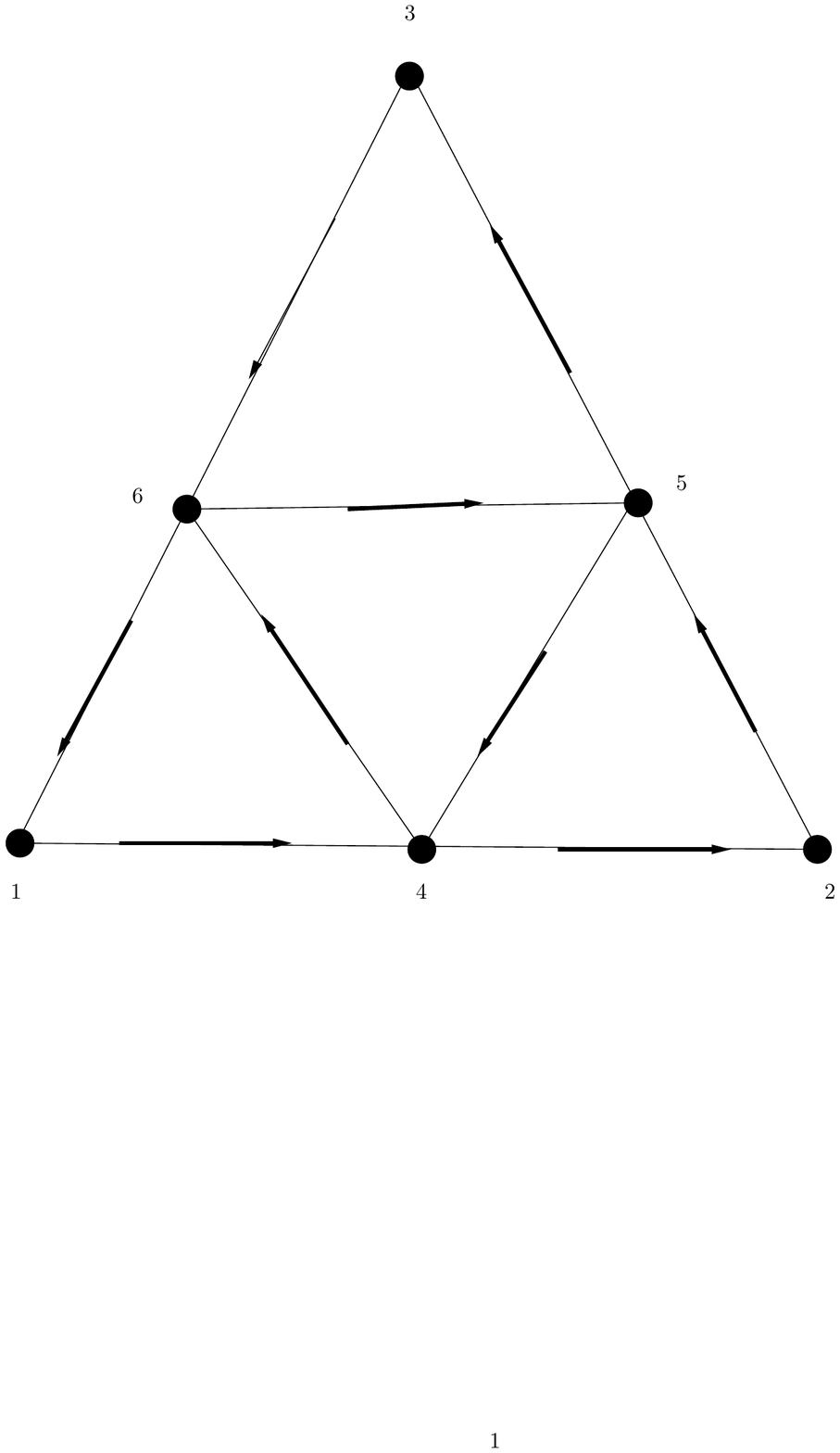}}
\subfigure[$\Q^2$ triangle]{\includegraphics[width=0.45\textwidth]{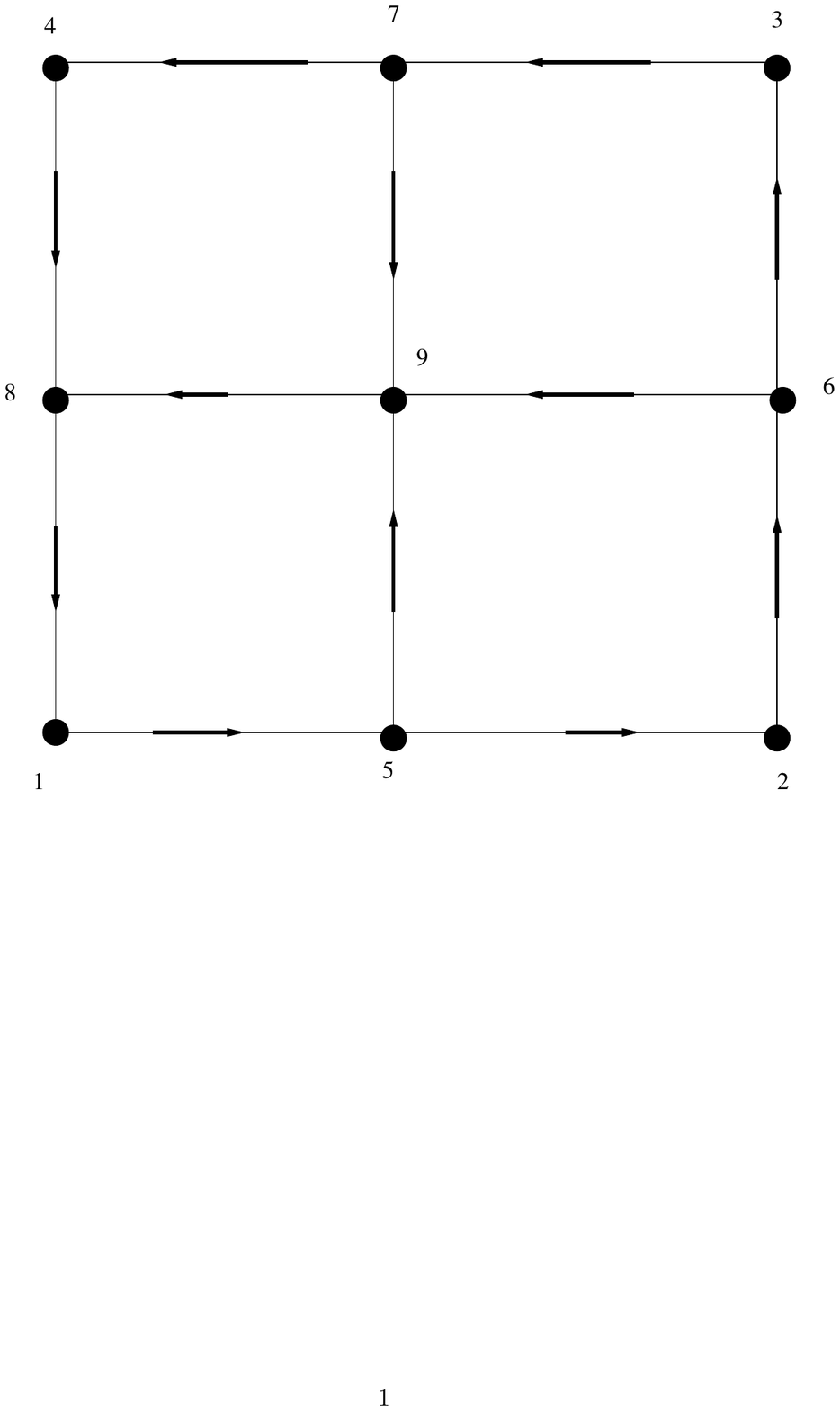}}
\end{center}
\caption{\label{graph} Examples of two oriented graphs.}
\end{figure}

The problem is to find normals $\bn_{\sigma\sigma'}$ with  $\bn_{\sigma\sigma'}=-\bn_{\sigma'\sigma}$ and quantities $\hbbf_{\bn_{\sigma\sigma'}}:=\hbbf_{\sigma,\sigma'}$ for any edge $[\sigma,\sigma']$ of   $\mathcal{T}_K$ such that:
\begin{subequations}\label{GC:1}
\begin{equation}
\label{GC:1.1}
\Phi_\sigma=\sum_{\text{ edges }[\sigma,\sigma']} \hbbf_{\sigma,\sigma'}+\hbbf_\sigma^{b}
\end{equation}
with
\begin{equation}
\hbbf_{\sigma,\sigma'}=-\hbbf_{\sigma',\sigma}
\label{GC:1.2}
\end{equation}
and $\hbbf_\sigma^{b}:=\hbbf_\sigma^{b}(\bbu_1, \ldots, \bbu_{k_\sigma})$ is the 'part' of $\oint_{\partial K} \hbbf_\bn(\bu_h,\bu^{h,-}) \; d\gamma$ associated to $\sigma$. {The control volumes will be defined by their normals so that we get consistency.}

Note that \eqref{GC:1.2} implies the conservation relation
\begin{equation}
\label{GC:conservation}
\sum\limits_{\sigma\in K}\Phi_\sigma=\sum\limits_{\sigma\in K}\hbbf_\sigma^b.
\end{equation}
In short, we will consider 
\begin{equation}
\label{BC:1.3}
\hbbf_\sigma^b=\oint_{\partial K} \varphi_\sigma\; \hbbf_\bn (\bu_h,\bu^{h,-}) \; d\gamma,
\end{equation}
\end{subequations}
but other  examples can be considered provided the consistency \eqref{GC:conservation} relation holds true: the choice is a bit arbitrary, provided \eqref{GC:conservation} holds true.

Any edge $[\sigma,\sigma']$ is either direct or, if not, $[\sigma',\sigma]$ is direct. Because of \eqref{GC:1.2}, we only need to know $\hbbf_{\sigma,\sigma'}$ for direct edges. Thus we introduce the notation $\hbbf_{\{\sigma,\sigma'\}}$ for  the flux  assigned to  the direct edge whose extremities are $\sigma$ and $\sigma'$. We can rewrite \eqref{GC:1.1} as, for any $\sigma\in \mathcal{S}$,
\begin{equation}
\label{GC:1.1bis}
\sum_{\sigma'\in \mathcal{S}} \varepsilon_{\sigma,\sigma'} \hbbf_{\{\sigma,\sigma'\}}=\Psi_\sigma:=\Phi_\sigma-\hbbf_\sigma^b,
\end{equation}
with $$
\varepsilon_{\sigma,\sigma'}=\left \{
\begin{array}{ll}
0& \text{ if }\sigma \text{ and }\sigma' \text{ are not on the same edge of }\mathcal{T},\\
1& \text{ if } [\sigma,\sigma']\text{ is an edge and } \sigma \rightarrow \sigma' \text{ is direct,}\\
-1&  \text{ if } [\sigma,\sigma']\text{ is an edge and } \sigma' \rightarrow \sigma \text{ is direct.}
\end{array}
\right .
$$
$\mathcal{E}^+$ represents the set of direct edges.

Hence the problem is to find  a vector $\hbbf=(\hbbf_{\{\sigma,\sigma'\}})_{\{\sigma,\sigma'\} \text{ direct edges}}$ such that
$$A\hbbf=\Psi$$
where $\Psi=(\Psi_\sigma)_{\sigma\in \mathcal{S}}$ and $A_{\sigma \sigma'}=\varepsilon_{\sigma,\sigma'}$.

We have  the following lemma which shows the existence of a solution.
\begin{lemma}\label{lemma:flux}
For any couple $\{\Phi_\sigma\}_{\sigma\in \mathcal{S}}$ and $\{\hbbf_\sigma^{b}\}_{\sigma\in \mathcal{S}}$ satisfying the condition  \eqref{GC:conservation}, there exists numerical flux functions $\hbbf_{\sigma,\sigma'}$ that satisfy \eqref{GC:1}. Recalling that the  matrix of the Laplacian of the graph is $L=AA^T$, we have
\begin{enumerate}
\item The rank of $L$ is $|\mathcal{S}|-1$ and its image is $\big (\text{span}\{\mathbf{1}\})^\bot$. We still denote the inverse of $L$ on $\big (\text{span}\{\mathbf{1}\} )^\bot$ by $L^{-1}$,
\item 
With the previous notations, a solution is 
\begin{equation}
\label{eq:lemma}\big (\hbbf_{\{\sigma,\sigma'\}}\big )_{\{\sigma,\sigma'\} \text{ direct edges}}=A^TL^{-1} \big (\Psi_\sigma\big )_{\sigma\in \mathcal{S}}.\end{equation}
\end{enumerate}
\end{lemma}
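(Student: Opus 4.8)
The plan is to read the linear system $A\hbbf=\Psi$ as a flow problem on the oriented graph $\mathcal{T}_K$ and to solve it through the graph Laplacian $L=AA^T$. Here $A$ is the (oriented) vertex–edge incidence matrix: rows are indexed by the degrees of freedom $\sigma\in\mathcal{S}$, columns by the direct edges $\{\sigma,\sigma'\}\in\mathcal{E}^+$, and the entries are the signs $\varepsilon_{\sigma,\sigma'}$, so each column has exactly one $+1$ and one $-1$. I would first record the elementary structural facts: since every column of $A$ sums to zero we have $\mathbf{1}^TA=0$, hence $\mathrm{Im}(A)\subseteq \mathbf{1}^\perp$; and since $K$ is a polytope and $\mathcal{T}_K$ is a genuine triangulation of it, the graph $(\mathcal{S},\mathcal{E}^+)$ is connected, so by the classical spanning-tree argument $\mathrm{rank}(A)=|\mathcal{S}|-1$ and $\ker(A^T)=\mathrm{span}\{\mathbf{1}\}$. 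All of this is done component by component in $\R^m$, because the residuals $\Phi_\sigma$ and the unknown fluxes are $\R^m$-valued and the linear algebra decouples over the $m$ components.

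Next I would deduce item (1). The matrix $L=AA^T$ is symmetric and positive semidefinite, and the identity $x^TLx=\|A^Tx\|^2$ shows $\ker L=\ker A^T=\mathrm{span}\{\mathbf{1}\}$; therefore $\mathrm{rank}(L)=|\mathcal{S}|-1$ and, $L$ being symmetric, $\mathrm{Im}(L)=(\ker L)^\perp=\mathbf{1}^\perp$. This is precisely the statement of item (1), and it is what gives meaning to the symbol $L^{-1}$ as the inverse of the restriction of $L$ to $\mathbf{1}^\perp$ (equivalently, the Moore–Penrose pseudoinverse of $L$).

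For item (2), the crucial point is that the compatibility hypothesis \eqref{GC:conservation} is exactly the statement $\mathbf{1}^T\Psi=\sum_{\sigma}(\Phi_\sigma-\hbbf_\sigma^b)=0$, i.e. $\Psi\in\mathbf{1}^\perp=\mathrm{Im}(L)$. Hence $L^{-1}\Psi$ is well defined, and setting $\hbbf=A^TL^{-1}\Psi$ I would simply check $A\hbbf=AA^TL^{-1}\Psi=LL^{-1}\Psi=\Psi$, which is \eqref{GC:1.1bis}; the antisymmetry relation \eqref{GC:1.2} holds by construction, since the unknowns are indexed only by direct edges and all sign bookkeeping is absorbed into $\varepsilon_{\sigma,\sigma'}$ (equivalently, into $A$ and $A^T$). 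I would also remark that the solution is not unique: the homogeneous solutions form the cycle space of the graph, of dimension $|\mathcal{E}^+|-|\mathcal{S}|+1$, and the particular choice $A^TL^{-1}\Psi$ is the one lying in $\mathrm{Im}(A^T)=(\ker A)^\perp$, hence the minimum-norm solution.

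The only genuinely non-routine ingredient is the graph-theoretic input — that $\mathcal{T}_K$ is connected and that the incidence matrix of a connected graph has rank $|\mathcal{S}|-1$ with cokernel spanned by $\mathbf{1}$; once this is in place, everything else follows mechanically from symmetric-matrix linear algebra. The connectivity is geometrically evident for a triangulation of a polytope but deserves an explicit sentence, and the rank claim is standard (equivalent to the existence of a spanning tree), so I would either cite it or insert the one-line proof. Finally I would stress what the lemma does \emph{not} assert: only the solvability of \eqref{GC:1}. The consistency property \eqref{flux:def:2} of the reconstructed flux is a separate issue, settled afterwards by the choice of the normals $\bn_{\sigma\sigma'}$, and so it plays no role in this proof.
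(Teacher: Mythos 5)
Your proof is correct and is essentially the argument the paper intends (the paper itself only cites \cite{AbgrallConservation} for the proof, but the statement of the lemma already presupposes exactly the graph-Laplacian structure you supply): read $A$ as the oriented incidence matrix, use connectivity of $\mathcal{T}_K$ to get $\ker A^T=\mathrm{span}\{\mathbf{1}\}$ and hence $\mathrm{rank}(L)=|\mathcal{S}|-1$ with $\mathrm{Im}(L)=\mathbf{1}^\perp$, observe that \eqref{GC:conservation} is precisely $\mathbf{1}^T\Psi=0$, and verify $A(A^TL^{-1}\Psi)=LL^{-1}\Psi=\Psi$. Your added observations (componentwise decoupling over $\R^m$, non-uniqueness up to the cycle space, and the minimum-norm characterisation of $A^TL^{-1}\Psi$) are correct and consistent with the paper's subsequent remark on computing $L^{-1}$ via a rank-one shift.
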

The proof can be found in \cite{AbgrallConservation}. The computation of $A^TL^{-1}$ is easy in practice: since $L$ is symetric, the range of $L$ is orthogonal to its kernel, spanned by $x_0=(1,\ldots , 1)^T$. Hence, for any $\lambda\neq 0$, the matrix $L+\lambda \frac{x_0\otimes x_0}{||x_0||^2}$ is invertible, and the matrix written as $L^{-1}$ with some abuse of language is $(L+\lambda \frac{x_0\otimes x_0}{||x_0||^2})^{-1}- \frac{x_0\otimes x_0}{\lambda||x_0||^2}$: the computation can be done by any standard matrix inversion package.

If in addition, the boundary flux satisfy for any $\bu$
\begin{equation}\label{consistency:bFlux}
\hbbf_\sigma^b(\bbu, \ldots , \bbu)=\bbf(u)\cdot \mathbf{N}_\sigma,
\end{equation}
as for example \eqref{BC:1.3}, then this set of flux are consistent and the  normals $\bn_{\sigma,\sigma'}$ are given by
\begin{equation}
\label{GC:normals}
\big ( \bn_{\sigma,\sigma'}\big )_{[\sigma,\sigma']\in \mathcal{E}^+}=A^TL^{-1} \big ( \mathbf{N}_{\sigma_1}, \ldots , \mathbf{N}_{\sigma_{\#K}}\big )^T
\end{equation}
 We can state:
\begin{proposition}
If the residuals $(\Phi_\sigma)_{\sigma\in K}$ and the boundary fluxes $(\hbbf_\sigma^b)_{\sigma\in K}$ satisfy \eqref{GC:conservation} and \eqref{consistency:bFlux}, then we can find  a set of consistent flux $(\hbbf_{\sigma,\sigma'})_{[\sigma,\sigma']} $ satisfying \eqref{GC:1}. They are given by \eqref{eq:lemma}. In addition, for a constant state,
$$\hbbf_{\sigma,\sigma'}(\bu_h)=\bbf(\bu_h)\cdot\bn_{\sigma,\sigma'}$$ for the normals defined by \eqref{GC:normals}.
\end{proposition}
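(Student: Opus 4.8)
The plan is to deduce the proposition from Lemma~\ref{lemma:flux} together with a short linearity argument for the consistency. First I would observe that the only hypothesis needed to apply Lemma~\ref{lemma:flux} is already granted: writing $\Psi=(\Psi_\sigma)_{\sigma\in\mathcal S}$ with $\Psi_\sigma=\Phi_\sigma-\hbbf_\sigma^b$, the conservation relation \eqref{GC:conservation} says precisely $\langle\Psi,\mathbf 1\rangle=\sum_\sigma\Phi_\sigma-\sum_\sigma\hbbf_\sigma^b=0$, i.e. $\Psi$ lies in the image $(\mathrm{span}\{\mathbf 1\})^\perp$ of the graph Laplacian $L=AA^T$. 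Hence $L^{-1}\Psi$ is well defined, $\hbbf:=A^TL^{-1}\Psi$ solves $A\hbbf=AA^TL^{-1}\Psi=LL^{-1}\Psi=\Psi$, which is \eqref{GC:1.1bis} (equivalently \eqref{GC:1.1}), and \eqref{GC:1.2}/\eqref{flux:def:1} holds by construction since $\hbbf$ is only defined on direct edges and extended antisymmetrically. This gives the first assertion and the formula \eqref{eq:lemma}.

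Next I would check consistency in the sense of \eqref{flux:def:2}. Freeze a constant state $\bu_h\equiv\bu$ on $K$. By the consistency property of the residuals, $\Phi_\sigma=0$ for every $\sigma\in K$, and by \eqref{consistency:bFlux} (valid e.g. for the choice \eqref{BC:1.3}) one has $\hbbf_\sigma^b=\bbf(\bu)\cdot\mathbf N_\sigma$; therefore $\Psi_\sigma=-\bbf(\bu)\cdot\mathbf N_\sigma$. I would also record that $\sum_{\sigma}\mathbf N_\sigma=\mathbf 0$ — immediately from $\sum_\sigma\varphi_\sigma\equiv 1$ and the closedness of $\partial K$ when $\hbbf_\sigma^b$ is taken as in \eqref{BC:1.3}, or in general from \eqref{GC:conservation} read at a constant state — so that $(\mathbf N_\sigma)_{\sigma\in\mathcal S}$ itself lies in $\mathrm{Im}(L)$ and the expression \eqref{GC:normals} is meaningful.

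The final step is the observation that the contraction $\bn\in\R^d\mapsto\bbf(\bu)\cdot\bn\in\R^m$ is $\R$-linear and commutes with the scalar operator $A^TL^{-1}$, which acts componentwise on the $\mathcal S$-indexed array. Substituting $\Psi_\sigma=-\bbf(\bu)\cdot\mathbf N_\sigma$ into \eqref{eq:lemma} then yields, for every direct edge $[\sigma,\sigma']$, that $\hbbf_{\{\sigma,\sigma'\}}(\bu,\ldots,\bu)=\bbf(\bu)\cdot\bigl(A^TL^{-1}(\mathbf N_{\sigma_1},\ldots,\mathbf N_{\sigma_{\#K}})^T\bigr)_{\{\sigma,\sigma'\}}=\bbf(\bu)\cdot\bn_{\sigma,\sigma'}$ with $\bn_{\sigma,\sigma'}$ exactly the normals \eqref{GC:normals}, antisymmetric in $\sigma,\sigma'$ for the same structural reason as the fluxes. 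That is the claimed consistency, and the proposition follows.

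I expect the only delicate part to be the bookkeeping that mixes the $\R^m$-valued flux $\bbf$ with the $\R^d$-valued normals: one must state explicitly that $\bbf(\bu)\cdot(\cdot)$ commutes with the graph-Laplacian inversion, and one must track the orientation of $\mathbf N_\sigma$ (the \emph{outward} trace in \eqref{BC:1.3}, together with the sign in $\Psi_\sigma=\Phi_\sigma-\hbbf_\sigma^b$) so that \eqref{GC:normals} comes out with the sign compatible with \eqref{flux:def:1}. A secondary, easily-overlooked point is the verification that $(\mathbf N_\sigma)_{\sigma}\in\mathrm{Im}(L)$, which is again precisely where the closedness of $\partial K$ (equivalently \eqref{GC:conservation}) re-enters.
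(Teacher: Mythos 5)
Your argument is correct and follows essentially the route the paper intends: Lemma~\ref{lemma:flux} gives existence via $\hbbf=A^TL^{-1}\Psi$ (solvable because \eqref{GC:conservation} puts $\Psi$ in $(\mathrm{span}\{\mathbf 1\})^\perp=\mathrm{Im}(L)$, and antisymmetry is built into the direct-edge convention), while consistency follows by substituting the constant state into \eqref{eq:lemma} and using that the componentwise operator $A^TL^{-1}$ commutes with the linear map $\bn\mapsto\bbf(\bu)\cdot\bn$. The one point you flag but should actually resolve is the sign in your final display: with $\Phi_\sigma=0$ and $\hbbf_\sigma^b=\bbf(\bu)\cdot\mathbf N_\sigma$ one gets $\Psi_\sigma=-\bbf(\bu)\cdot\mathbf N_\sigma$, hence $\hbbf_{\{\sigma,\sigma'\}}=-\,\bbf(\bu)\cdot\bigl(A^TL^{-1}(\mathbf N_{\sigma_1},\ldots,\mathbf N_{\sigma_{\#K}})^T\bigr)_{\{\sigma,\sigma'\}}$, so the minus sign must be absorbed into the definition of $\bn_{\sigma,\sigma'}$ (or of $\mathbf N_\sigma$) for the stated formula to hold --- a convention ambiguity already present in \eqref{GC:normals} as written.
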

This also defines the control volumes since we know their normals.

\bigskip

We can state a couple of general remarks:
\begin{remark}
\begin{enumerate}
\item The flux $\hbbf_{\sigma,\sigma'}$ depend on the $\Psi_\sigma$ and not directly on the $\hbbf_\sigma^b$. We can design the fluxes independently of the boundary flux, and their consistency directly comes from the consistency of the boundary fluxes.
\item 
The residuals depends on more than 2 arguments. For stabilized finite element methods, or the non linear stable residual distribution
 schemes, see e.g.  \cite{Hughes1,struijs,abgrallLarat}, the residuals depend on all the states on  $K$. Thus
the formula \eqref{eq:lemma} shows that the flux depends on more than two states in contrast to the  1D case. In the finite volume case however, the support of the flux function is generally larger than the three states of $K$, think for example of an ENO/WENO method, or a simpler MUSCL one.
\item The formula \eqref{eq:lemma} are influenced by the form of the total residual \eqref{fv:tot:residu}.  We show in the next paragraph how this can be generalized.
%\item We have set at the beginning that $\hbbf_{{\sigma,\sigma'}}=-\hbbf_{{\sigma',\sigma}}$. The formula \eqref{eq:lemma} are antisymmetric with respect to the indices, and then do respect the assumed equality.
\item The formula \eqref{eq:lemma} make no assumption on the approximation space $v_h$: they are valid for continuous and discontinuous approximations. The structure of the approximation space appears only in the total residual.
\item Quadrature formula: all the relations we use are obtained by quadrature formula. This means that the integration does not need to be exact.
\end{enumerate}

\end{remark}

To end this paragraph, let us give one example. We consider a triangle and a quadratic approximation, see figure \ref{figP2}
\begin{figure}[h]
\begin{center}
\includegraphics[width=0.45\textwidth]{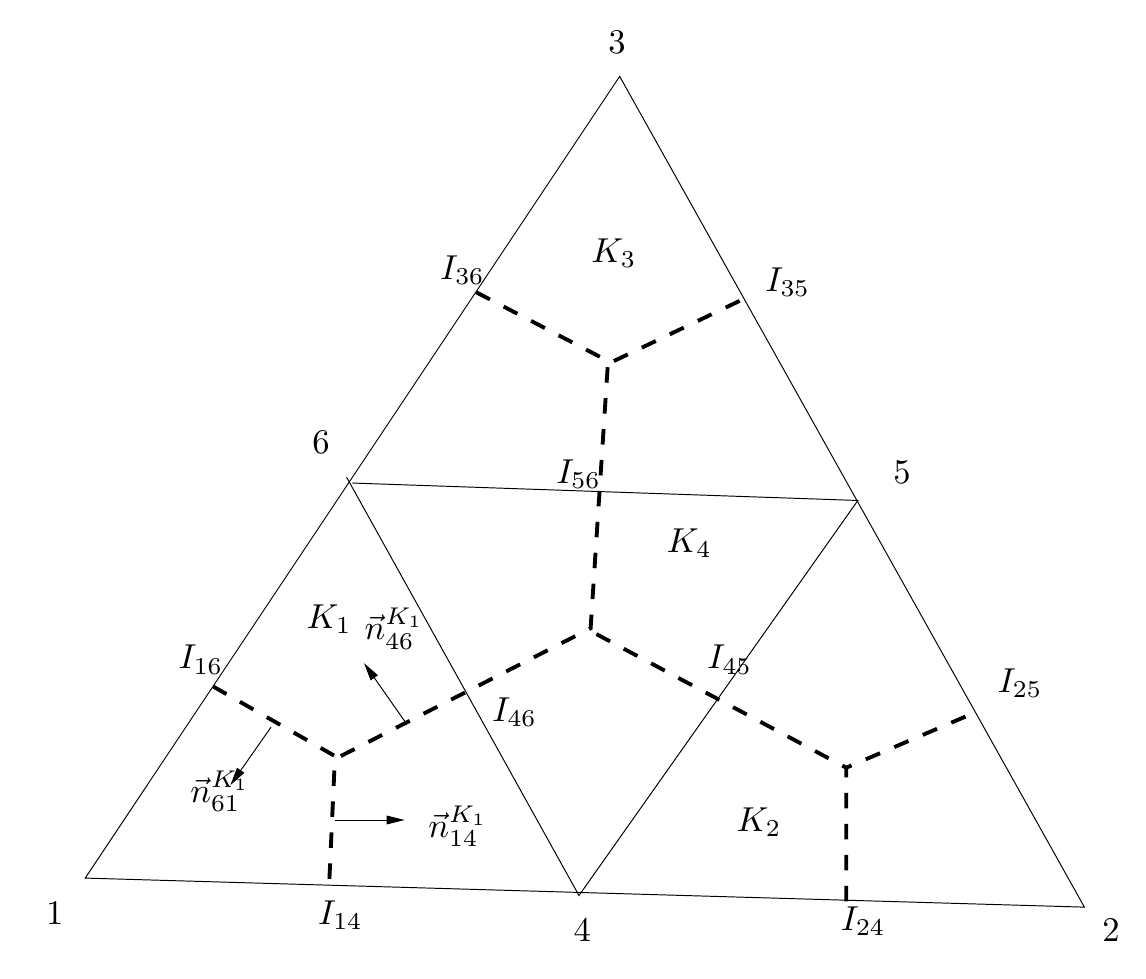}
\end{center}
\caption{\label{figP2} Example of a quadratic triangle. The degrees of freedom are $\sigma=1, \ldots, 6$. The graph is shown with plain lines. Doted lines: the control volumes  (for the discontinuous case) and   the intersection of the control volumes with the elements for a globally continuous approximation. The normal are also represented.
}
\end{figure}
The one can easily show  that
$$
\begin{array}{lcl}
\hbbf_{14}&=&\dfrac{1}{12}\big (\Psi_1-\Psi_4\big )+\dfrac{1}{36}\big ( \Psi_6-\Psi_5\big )+\dfrac {7}{36}\big (\Psi_1- \Psi_2\big )+\dfrac {5 }{36}\big (\Psi_3-\Psi_1\big )\\
&\\
\hbbf_{16}&=&\dfrac{1}{12}\big ( \Psi_4-\Psi_1\big )+\dfrac {5}{36}\big ( \Psi_5-\Psi_1)
+\dfrac {7}{36}\big ( \Psi_6-\Psi_1\big ) +\dfrac{1}{36}\big ( \Psi_3- \Psi_2\big )\\
&\\
\hbbf_{46}&=&\dfrac{2}{9}\big (\Psi_2-\Psi_6\big )+\dfrac{1}{9}\big (  \Psi_3- \Psi_5\big )\\
&\\
\hbbf_{54}&=&\dfrac{2}{9}\big (  \Psi_5-\Psi_2\big )+\dfrac{1}{9}\big ( \Psi_5-\Psi_1\big )\\
\end{array}
$$
$$
\begin{array}{lcl}
%&\\
\hbbf_{42}&=&\dfrac {7}{36}\big (\Psi_2-\Psi_3\big ) +\dfrac{5}{36}\big (\Psi_1-\Psi_3\big )+\dfrac{1}{12}\big(\Psi_6-\Psi_3\big )+\dfrac{1}{36}\big (\Psi_5-\Psi_4\big ) \\

&\\
\hbbf_{25}&=&\dfrac{1}{36}\big (\Psi_2-\Psi_1\big )+\dfrac{5}{36}\big (\Psi_3-\Psi_5\big ) +\dfrac{7}{36}\big ( \Psi_3-\Psi_5\big )+\dfrac{1}{12}\big (\Psi_3-\Psi_6\big )  \\

&\\
\hbbf_{53}&=&\dfrac{1}{36}\big (\Psi_1-\Psi_6\big )+\dfrac{5}{36}\big (\Psi_3-\Psi_5\big )+\dfrac{7}{36}\big (\Psi_4-\Psi_5\big )+\dfrac{1}{12}\big (\Psi_2-\Psi_5\big )
\\
&\\
\hbbf_{63}&=& \dfrac{1}{36}\big (\Psi_4-\Psi_3\big )+\dfrac{5}{36}\big (\Psi_5-\Psi_1\big )+\dfrac{7}{36}\big (\Psi_5-\Psi_6\big )+\dfrac{1}{12}\big (\Psi_5-\Psi_2\big )\\
&\\

\hbbf_{65}&=&\dfrac{1}{9}\big (\Psi_1- \Psi_3\big )+\dfrac{2}{9}\big ( \Psi_6- \Psi_4\big )\end {array}
$$
Then we choose the boundary flux:
$$\hbbf_\sigma^b=\int_{\partial K}\hf_\bn \varphi_\sigma\; d\gamma$$ and get:
$$
\begin{array}{lll}
\normal_l=-\dfrac{\bn_l}{6} & \text{if }l=1,2,3\\ &&\\
\normal_4=\dfrac{\bn_3}{3}& \normal_5=\dfrac{\bn_1}{3}& \normal_6=\dfrac{\bn_2}{3}
\end{array}
$$
The normals are given by:
$$
\begin{array}{lcl}
\bn_{14}&=&\dfrac{1}{12}\big (\normal_1-\normal_4\big )+\dfrac{1}{36}\big ( \normal_6-\normal_5\big )+\dfrac {7}{36}\big (\normal_1- \normal_2\big )+\dfrac {5 }{36}\big (\normal_3-\normal_1\big )\\
&\\
\bn_{16}&=&\dfrac{1}{12}\big ( \normal_4-\normal_1\big )+\dfrac {5}{36}\big ( \normal_5-\normal_1)
+\dfrac {7}{36}\big ( \normal_6-\normal_1\big ) +\dfrac{1}{36}\big ( \normal_3- \normal_2\big )\\
&\\

\bn_{46}&=&\dfrac{2}{9}\big (\normal_2-\normal_6\big )+\dfrac{1}{9}\big (  \normal_3- \normal_5\big )\\
&\\
\bn_{54}&=&\dfrac{2}{9}\big (  \normal_5-\normal_2\big )+\dfrac{1}{9}\big ( \normal_5-\normal_1\big )
\end{array}
$$
$$
\begin{array}{lcl}
%&\\
\bn_{42}&=&\dfrac {7}{36}\big (\normal_2-\normal_3\big ) +\dfrac{5}{36}\big (\normal_1-\normal_3\big )+\dfrac{1}{12}\big(\normal_6-\normal_3\big )+\dfrac{1}{36}\big (\normal_5-\normal_4\big ) \\

&\\
\bn_{25}&=&\dfrac{1}{36}\big (\normal_2-\normal_1\big )+\dfrac{5}{36}\big (\normal_3-\normal_5\big ) +\dfrac{7}{36}\big ( \normal_3-\normal_5\big )+\dfrac{1}{12}\big (\normal_3-\normal_6\big )  \\

&\\
\bn_{53}&=&\dfrac{1}{36}\big (\normal_1-\normal_6\big )+\dfrac{5}{36}\big (\normal_3-\normal_5\big )+\dfrac{7}{36}\big (\normal_4-\normal_5\big )+\dfrac{1}{12}\big (\normal_2-\normal_5\big )
\\
&\\
\bn_{63}&=& \dfrac{1}{36}\big (\normal_4-\normal_3\big )+\dfrac{5}{36}\big (\normal_5-\normal_1\big )+\dfrac{7}{36}\big (\normal_5-\normal_6\big )+\dfrac{1}{12}\big (\normal_5-\normal_2\big )\\
&\\

\bn_{65}&=&\dfrac{1}{9}\big (\normal_1- \normal_3\big )+\dfrac{2}{9}\big ( \normal_6- \normal_4\big )\end {array}
$$

\section{Satisfaction of constraints}
In this section, we want to show that  this notion of conservation at the level of elements can also be useful to construct, from a known scheme, a new one that satisfy additional constraints. More explicitly, let us consider the two different problems:
\begin{itemize}
\item Assume there is a function $\bbu \in \R^m\mapsto \bU(\bbu)\in \R^{m\times m}$ such that the (possibly)  non conservative non linear PDE
\begin{subequations}\label{ExampleA}
\begin{equation}\label{ExampleA:1}\dpar{\bbu}{t}+\mathbf{A}\cdot \nabla \bbu=0\end{equation}
can be put in conservation form by:
\begin{equation}
\label{ExampleA:2}\bU(\bbu)^T \dpar{\bbu}{t}=\dpar{\bw}{t}, \qquad \bU(\bbu)^T \mathbf{A}\cdot \nabla \bbu=\text{ div }\bbf(\bw).\end{equation}
\end{subequations}
An example, taken from fluid mechanics,  is 
$$\bbu=\begin{pmatrix}\rho \\ \rho \bv \\ e\end{pmatrix}, \mathbf{A}\cdot \nabla \bbu=\begin{pmatrix}
\text{ div } (\rho \bv )\\\text{ div }\big (\rho \bv\otimes \bv +p\text{Id}\big )\\
\bv\nabla e+ (e+p)\text{ div }\bv\end{pmatrix}.
$$
If we take $$\bU=\begin{pmatrix} 1& \mathbf{0} &0 \\
0& \mathbf{Id}& 0\\
-\frac{\bv^2}{2} & \bv & 1
\end{pmatrix}
$$
then we recover the conservative form of the Euler equations.
\item Assume formally that the conservative system
$$\dpar{\bbu}{t}+\text{ div }\bbf(\bbu)=0$$ satisfies an additional conservation relation,
\begin{equation}\label{ExampleB}\dpar{E}{t}+\text{ div} \bbg(\bbu)=0\end{equation}
where $E$ is a function of the state $\bbu$. Assume in addition, that we satisfy the second system  by some algebraic manipulations, for example there exists a mapping $\bbu\in \R^m \mapsto \bv(\bbu)\in \R^m$ such that
$$
\bv^T\dpar{\bbu}{t}=\dpar{E}{t}, \qquad \bv^T\text{ div }\bbf(\bbu)=\text{ div }\bbg(\bu).$$
In other words,
$$\bv=\nabla_\bbu E, \bv^T\nabla_\bu\bbf=\nabla_\bbu \bbg.$$
An example is the entropy $E$.
%\item A last example is about the application of a linear differential operator $\mathcal{D}$ that depends only on space, and such that
%\begin{equation}
%\label{ExampleC}\mathcal{D}\big ( \text{div }\bbf(\bbu)\big )=0,\end{equation}
%so that
%$\mathbf{D}(\bbu)$ is conserved. An example is given by the linearised Euler system:
%\begin{equation*}
%\begin{split}
%\dpar{\bu}{t}&+\nabla p=0\\
%\dpar{p}{t}&+c^2\text{ div }\bu=0
%\end{split}
%\end{equation*}
%for which $\mathbf{D}(\bbu)=\nabla\wedge \bu$ stays constant.
\end{itemize}

We first show how, starting from a known scheme, we can modify it so that the new scheme will satisfy the additional constraints.  Since both problems are unsteady, we first explain how we discretise them in space and time, then we show how to modify the schemes.
%%%%%%%%%%%%%%%%%%%%
\subsection{Discretisation in space and time}
%%%%%%%%%%%%%%%%%%%%
Let us consider \eqref{eq:1} and follow \cite{Mario,Abgrall2017}.  If one wants to discretize this problem starting from the schemes of the type \eqref{RDS} where the residuals are given by \eqref{SUPG}, \eqref{Jump} or  \eqref{schema RDS}, and if we want to keep the automatic consistency with the original PDE that is provided by the residual formulation,  we are led to method where there is a mass matrix.  This is also true with \eqref{DG:var}, but here the mass matrix is block diagonal  (so easy to invert)contrarily to the other cases where it is only sparse. In addition, in the case \eqref{schema RDS}, it is very unclear that this mass matrix (which is formal in that case) is \ldots invertible: the trick introduced in \cite{Mario} is precisely done to avoid to invert the mass matrix, to the price of adding some dissipative term. In \cite{Abgrall2017}, this trick was reinterpreted as a Defect Correction approach and then generalised to any order. 
In detail the formal scheme will write as: find $\bu_h\in v_h$ such that for any test function $v_h$m, we have
\begin{equation}\label{unsteady}\langle \dpar{\bu_h}{t},v_h \rangle + a(\bu_h, v_h)=0\end{equation}
where $a$ is the form defined in \eqref{SUPG}, \eqref{Jump} or  \eqref{schema RDS} that we can write as
$$a(\bu_h, v_h)=\sum_{K\subset \Omega} \int_K v_h \; \text{ div }\bbf(\bbu_h) \; d\bx +J(\bu_h,v_h)$$,
$J$ is a possible jump term (such as in \eqref{Jump} and \eqref{schema RDS jump} and $v_h$ a test function that is
\begin{itemize}
\item constant by element $K$ for \eqref{schema RDS SUPG}  and \eqref{schema RDS jump},
\item any element of $v_h$ for \eqref{Jump},
\item of the form $w_h+h_K\tau_K \nabla_u\bbf(\bu_h)\cdot \nabla w_h$ for \eqref{SUPG}
\end{itemize}
Then 
$$\langle \dpar{\bu_h}{t},v_h \rangle=\sum_{K\subset \Omega} \int_K v_h \dpar{\bu_h}{t}\; d\bx.$$ 
A priori, the idea is to start from
\begin{equation}\label{star}\langle \bu_h(t),v_h\rangle=\langle \bu_h(t_n),v_h\rangle-\int_{t_n}^{t} a(\bu_h(s), v_h) \; ds\end{equation}
and to use some quadrature formula. Note that the test function do not depend on time. This amount to subdivide the interval $[t_n,t_{n+1}]$ with sub-timesteps $t_{n,0}=t_n<t_{n,1}<\ldots< t_{n,p-1}<t_{n,p}=t_{n+1}$ and  to approximate the relation \eqref{star} at the sub-timesteps. If $\bu_h$ is the vector $(\bu_h^n, \bu_h^{n,1}, \ldots, \bu_h^{n,p-1}, \bu_h^{n+1})$ with $\bu_h^{n,l}\approx\bu(t_{n,l})$, we write the approximation as
\begin{equation}\label{starstar}
\langle \bu_h,v_h\rangle=\langle \bu_h(t_n),v_h\rangle- \Delta t A(\bu_h, v_h) \end{equation}
For example,  the Crank-Nicholson method leads to
$$\begin{pmatrix}
\langle \bu_h^{n+1},v_h\rangle\\\langle \bu_h^{n},v_h\rangle\end{pmatrix}=\begin{pmatrix}\langle \bu_h^{n},v_h\rangle\\\langle \bu_h^{n},v_h\rangle\end{pmatrix}-\Delta t
\begin{pmatrix}\dfrac{a(\bu_h^{n+1}, v_h)+a(\bu_h^n, v_h)}{2}\\0\end{pmatrix}
$$
and $$A(\bu_h,v_h):=\begin{pmatrix}\dfrac{a(\bu_h^{n+1}, v_h)+a(\bu_h^n, v_h)}{2}\\0\end{pmatrix}.$$
In \cite{Abgrall2017}, knowing $\bu_h^n$, we compute $\bu_h^{n+1}$ by the following algorithm (we just show the simplest version).
\begin{itemize}
\item Set $\bu^{(0)}=\bu_h^n$,
\item for $p=1,\ldots , N$,  compute $\bu^{(p+1)}$ by solving
$$\langle\langle \bu^{(p+1)}, v_h\rangle\rangle =\langle\langle \bu^{(p)}, v_h\rangle\rangle- \langle \bu^{(p)}-\bu^{n}, v_h\rangle -A(\bu^{(p)}, v_h)$$
where
$$\langle\langle \bu_h,\bv_h\rangle\rangle=\sum_{K\subset \Omega} C_K \sum_{\sigma\in K}(\bu_h)_\sigma( v_h)\sigma.$$
%This amounts to have a diagonal mass matrix which is invertible if $C_K>0$ for all $K$ of the form
%\begin{equation}\label{Dec}
%|C_\sigma | \; \big ( \bbu_h^{(p+1)}-\bbu_h^{(p)} \big ) =\sum_{K\subset \Omega, \sigma \in K} \Phi_\sigma^K(\bu^{(P)} )+\sum_{f \subset \partial \Omega, \sigma\in \Gamma}\Psi_\sigma^f (\bu^{(p)} ),
%\end{equation}
%with, in the case of the Crank-Nicholson example \eqref{starstar}
The forms $a$ are written in term of spatial residuals of the type \eqref{SUPG}, \eqref{Jump} or \eqref{schema RDS SUPG}  and \eqref{schema RDS jump}.
\end{itemize}
In practice $N$ is equal to the number of sub-time steps, and the accuracy is not spoiled if some technical condition described in \cite{Abgrall2017} are met. They are met in particular is the degrees of freedoms $\sigma$ are the control points of the B\'ezier polynomials of degree $N$. 

\subsection{Corrected scheme: the example \eqref{ExampleA}}
We have a  set of residuals $\{\Phi_\sigma^K\}_{\sigma\in K}$ that satisfy the 'conservation' relations:
$$\sum_{\sigma\in K}
\Phi_\sigma^K=\oint_K\mathbf{A}\cdot \nabla \bbu_h \; d\bx.$$
The surface integral is approximated by a quadrature formula such that the error is $|K|O(h^{k+1})$, where $k$ is the polynomial degree.

If there exists some average of $\bU(\bbu)$, say $\overline{\bU}(\bbu^{(p+1)},\bbu^{(p)})$ such that
$$
\overline{\bU}(\bbu^{(p+1)},\bbu^{(p)})^T\; \big ( \bbu_h^{(p+1)}-\bbu_h^{(p)}\big ) =\bw^{(p+1)}-\bw^{(p)},$$
then the conservation constraints are recovered if for any $K$, we have:
\begin{equation}
\begin{split}
\sum_{\sigma \in K} \overline{\bU}(\bbu^{(p+1)},\bbu^{(p)})_\sigma^T\Phi_\sigma^K(\bu^n, \bu^{n,1,(p)}, &\ldots , \bu^{n+1,(p)} ) =\oint_K\big ( \bw^{(p)}-\bw^{(n)}\big )\; d\bx\\
&\qquad +\oint_{\partial K}\mathcal{I}(\bbf(\bbu^{n})\cdot\bn, \bbf(\bbu^{n,1,(p)})\cdot\bn, \ldots , \bbf(\bbu^{n+1(p)})\cdot\bn) \; d\gamma\\
\sum_{\sigma \in \Gamma} \overline{\bU}(\bbu^{(p+1)},\bbu^{(p)})_\sigma^T\Psi_\sigma^\Gamma(\bu^n, \bu^{n,1,(p)}, &\ldots , \bu^{n+1,(p)} )=\oint_{\Gamma}\mathcal{I}
(\bbf(\bbu^{n})\cdot\bn, \bbf(\bbu^{n,1,(p)})\cdot\bn, \ldots , \bbf(\bbu^{n+1,(p)})\cdot\bn)
\; d\gamma
\end{split}
\end{equation}
where $\Phi_\sigma^K(\bu^n, \bu^{n,1,(p)}, \ldots , \bu^{n+1,(p)} ) $ are the residual that are needed to define $A$ in \eqref{starstar} and  $\mathcal{I}
(\bbf(\bbu^{n})\cdot\bn, \bbf(\bbu^{n,1,(p)})\cdot\bn, \ldots , \bbf(\bbu^{n+1,(p)})\cdot\bn)$ is a weighted average of the normal flux at the sub-timesteps after the $p$-th iteration, this translate the way \eqref{starstar} discretise \eqref{star}. In the case of the Crank-Nicholson method, this is simply an arithmetic average between the states at $t_n$ and $t_{n+1}$.
Of course these relations are in general not satisfied by the original residuals, and we modify them by adding a correction
$$\Phi_\sigma^K\rightarrow \Phi_\sigma^K+r_\sigma^K, \qquad \Psi_\sigma^\Gamma\rightarrow \Psi_\sigma^\Gamma+r_\sigma^\Gamma.$$
We only describe what happens for the elements $K$.

The correction $r_\sigma^K$ must satisfy
\begin{equation}\label{average}
\begin{split}
\sum_{\sigma\in K} \overline{\bU}(\bbu^{(p+1)},\bbu^{(p)})_\sigma^Tr_\sigma^K&=\oint_K\big ( \bw^{(p)}-\bw^{(n)}\big )\; d\bx+\oint_{\partial K}\mathcal{I}(\bbf(\bbu^{n})\cdot\bn, \bbf(\bbu^{n,1,(p)})\cdot\bn, \ldots , \bbf(\bbu^{n+1,(p)})\cdot\bn) \; d\gamma\\
&\qquad -\sum_{\sigma \in K} \overline{\bU}(\bbu^{(p+1)},\bbu^{(p)})_\sigma^T\Phi_\sigma^K(\bu^n, \bu^{n,1,(p)}, \ldots , \bu^{n+1,(p)} ) .
\end{split}
\end{equation}
We have one (vectorial) relations, and at least 2 unknowns (since an element has at least two degrees of freedom). So by a simple linear algebra argument, there might be a solution.
The problem is to find the average in such a way that the explicit nature of the scheme is kept, and second to compute the correction: this is a case by case situation. We show with the example of fluid mechanics how this can be achieved and  show it on a specific example: second order of accuracy.

Using a piecewise linear interpolation in time, the conservation relations are, for $p=0,1$, 
$$\sum_{\sigma \in K} \overline{\bU}(\bbu^{(p+1)},\bbu^{(p)})_\sigma^T\Phi_\sigma^K(\bu^n, \bu^{(p)} ) =\oint_K\big ( \bw^{(p)}-\bw^{(n)}\big )\; d\bx+\oint_{\partial K}\dfrac{\bbf(\bbu^{n}\cdot\bn+ \bbf(\bbu^{(p)})\cdot\bn}{2}\; d\gamma.$$
Here we haver written $\bu^{(p)}$ for $\bu^{n+1,(p)}$ since there is no ambiguity.
Let us write explicitly the differences $\bw^{(p)}-\bw^{(n)}$ in term of the primitive variable.  In the sequel, for any quantity $f$, $\Delta f=f^{(p+1)}-f^{(p)}$.  

Since the density is a conservative and a primitive variable, there is nothing to write.
\begin{equation*}
\begin{split}
\Delta (\rho \bu)&= \rho^{(p+1)} \Delta \bu + \bu^{(p)} \Delta \rho\\
\Delta E&=\Delta e+\frac{1}{2}\Delta  ( \rho\bu^2)\\
& =\Delta e+\dfrac{\bu^{(p+1)}+\bu^{(p)}}{2} \Delta (\rho \bu)-\dfrac{\bu^{(p+1)}\cdot \bu^{(p)}}{2}\Delta \rho
\end{split}
\end{equation*}
i.e. in matrix form:
\begin{equation*}
\begin{pmatrix}
\Delta \rho\\
\Delta (\rho\bu)\\
\Delta E
\end{pmatrix}=\begin{pmatrix}
1 & \mathbf{0} & 0\\
\bu^{(p)}& \rho^{(p+1)} \Id& 0 \\
-\dfrac{\bu^{(p+1)}\bu^{(p)}}{2}& \dfrac{\bu^{(p)}+\bu^{(p+1)}}{2} & 1
\end{pmatrix} \begin{pmatrix}
\Delta \rho\\ \Delta \bu\\ \Delta e \end{pmatrix}
\end{equation*}
The matrix is lower triangular, so that the scheme can be kept explicit: we first compute the density: we know $\rho^{(p+1)}, \rho^{(p)}, \bu^{(p)}, e^{(p)}$. Then we compute the velocity $\bu^{(p+1)}$: we know $\rho^{(p)}, \rho^{(p+1)}, \bu^{(p)}, \bu^{(p+1)}, e^{(p)}$, and then the internal energy. The last question is how to evaluate the corrections. There is no correction on the  density component of the residuals. For the velocity component we write
$$
\sum_{\sigma \in K} \rho^{(p+1)}_\sigma (r_\bu^K)_\sigma=\oint_K \big ( \rho^{(p)}\bu^{(p)}-\rho^{n}\bu^{n}\big )\; d\bx +\oint_{\partial K} \dfrac{\rho^{(p)}\bu^{(p)}+\rho^{n}\bu^{n}}{2}\cdot \bn\; d\gamma -\sum_{\sigma\in K} \bu^{(p)}_\sigma (\Phi_\rho^K)_\sigma.$$
In this relation, the right hand side can be explicitly computed from what is  known: we have one vectorial relation and as many unknown as degrees of freedom in $K$. A priori, there is no reason why to weight differently the degrees of freedom, so we assume
$$(r_\bu^K)_\sigma=r_\bu^K,$$ and then
$$r_\bu^K=\bigg ( \oint_K \big ( \rho^{(p)}\bu^{(p)}-\rho^{n}\bu^{n}\big )\; d\bx +\oint_{\partial K} \dfrac{\rho^{(p)}\bu^{(p)}+\rho^{n}\bu^{n}}{2}\cdot \bn\; d\gamma -\sum_{\sigma\in K} \bu^{p}_\sigma (\Phi_\rho^K)_\sigma \bigg )\bigg (\sum_{\sigma \in K} \rho^{(p+1)}_\sigma\bigg )^{-1}.$$
The same method is used for the energy correction, and we get
\begin{equation*}
\begin{split}
r_e^K&=\frac{1}{\# K}\bigg ( \int_K \big ( E^{(p)}-E^{n}\big ) \; d\bx + \int_{\partial K} \frac{1}{2}\bigg ( \bu^{(p)}\cdot\bn (E^{p})+p^{(p)})+\bu^{n}\cdot\bn (E^n +p^{(0)})\bigg )\; d\Gamma\\ &\qquad -\sum_{\sigma\in K}\bigg [ \dfrac{\bu^{(p)}_\sigma+\bu^{(p+1)}_\sigma}{2}\cdot (\Phi_{\rho \bu}^K)_\sigma
 +\frac{1}{2}\dfrac{\bu^{(p+1)}\bu^{(p)}}{2}(\Phi_\rho^K)_\sigma \bigg ]\bigg )
\end{split}
\end{equation*}
where the $\Phi_{\rho \bu}^K$ are corrected residuals and $N_K$ the number of degrees of freedom in the element.
%%%%%%
\subsection{Corrected scheme: the example \ref{ExampleB}}
In that case, the same kind of trick is used, with a small difference: the corrections $r_\sigma$ must not destroy the initial conservation law, and we must have  a relation of the type \eqref{average}. In order to illustrate, we give the  example of fluid mechanics with the mathematical entropy $S=p\rho^{-\gamma+1}$. In the sequel, we denote by $\kappa=\gamma-1$ and recall that $p=\kappa e$.

We can write that $$\Delta S=(\rho^{(p+1)})^{-\kappa}\Delta p+p^{(p+1)} \Delta \rho^{-\kappa}=(\rho^{(p+1)})^{-\kappa}\Delta p+p^{(p+1)}\widetilde{\rho^{-\kappa}} \Delta \rho$$
where
$$\widetilde{\rho^{-\kappa}}=\left \{ \begin{array}{ll}
-\kappa \big (\rho^{(p)}\big )^{-\gamma} & \text{ if } \rho^{(p+1)}=\rho^{(p)}\\
\dfrac{\Delta (\rho^{-\kappa})}{\Delta \rho} & \text{ else.}
\end{array}\right .$$
Assuming we have a initial scheme for the variables $(\rho, \bu, p)$, combining the techniques of the previous example and the formula bellow, we see that the
residual on the velocity should corrected as previously, while the residuals on the pressure should be corrected such that
\begin{equation*}
\begin{split}
\sum_{\sigma\in K}\frac{1}{\kappa}\big ( (\Phi_p)^K_\sigma+(r_p^K)_\sigma\big ) &+\sum_{\sigma\in K}\bigg [ \dfrac{\bu^{(p)}_\sigma+\bu^{(p+1)}_\sigma}{2}\cdot (\Phi_{\rho \bu}^K)_\sigma 
 +\frac{1}{2}\dfrac{\bu^{(p+1)}\bu^{(p)}}{2}(\Phi_\rho^K)_\sigma \bigg ]\bigg )\\&=\int_K \big ( E^{(p)}-E^{(0)}\big ) \; d\bx + \int_{\partial K} \frac{1}{2}\bigg ( \bu^{(p)}\cdot\bn (E^{p})+p^{(p)})+\bu^{(0)}\cdot\bn (E^{0}+p^{(0)})\bigg )\; d\Gamma\\
 \sum_{\sigma\in K} \big ( \rho^{(p)}_\sigma \big )^{-\kappa}\big ( (\Phi_p)^K_\sigma&+(r_p^K)_\sigma\big )+\sum_{\sigma\in K}p_\sigma^{(p)}
 \widetilde{\rho^\alpha}_\sigma (\Phi_\rho^K)_\sigma\\&=\int_K \big ( S^{(p)}-S^{(0)}\big ) \; d\bx+\oint_{\partial K}\dfrac{\bu^{(p)}\cdot\bn S^{(p)}+\bu^{(0)}\cdot\bn S^{(0)}}{2}\; d\Gamma
 \end{split}
 \end{equation*}
 This leads to a linear system of the type
 \begin{equation*}
 \begin{split}
 \sum_{\sigma\in K} (r_p^K)_\sigma&=\mathcal{E}_1\\
 \sum_{\sigma\in K}\big ( \rho^{(p)}_\sigma \big )^{-\kappa}(r_p^K)_\sigma&=\mathcal{E}_2
 \end{split}
 \end{equation*}
 where $\mathcal{E}_1$ and $\mathcal{E}_2$ are computable quantities. Since there is more than two degrees of freedom in $K$, if the density is not uniform, we can compute the corrections.  If the densities are the same ($\rho^{(p)}_\sigma=\rho^{(p)}$), then one has to choose initial residuals such that $\big ( \rho^{(p)} \big )^{-\kappa}\mathcal{E}_1=\mathcal{E}_2$.
 
 \begin{remark} Using this entropy, we keep the explicit nature of the scheme. With other entropies, this is less clear.
 \end{remark}

\section{Conclusions and perspectives}
In this paper, we have discussed how the conservation property writes in the residual distribution framework.  Instead of looking at what happens  at the cell interfaces, we look at the element contributions.  Using this concept, it is possible to reformulate most if not all the known schemes as finite volume schemes, with explicit formula for the flux. Using this notion, it is possible to construct schemes that start from a non conservative formulation of a conservative systems, or to enforce more than one conservation relation. We show the principles, and provide some examples. Other examples are  possible, see for example \cite{svetlana}.
\bibliographystyle{unsrt}
\bibliography{papier}
\end{document}